\newcommand*\samethanks[1][\value{footnote}]{\footnotemark[#1]}
\newtheorem{assumption}[theorem]{Assumption}
\newcommand{\bu}{\mathbf{u}}
\newcommand{\bp}{\mathbf{p}}
\newcommand{\dx}{\dot{x}}
\newcommand{\dq}{\dot{q}}
\newcommand{\Tt}{^{{\mbox{\tiny \bf \sf T}}}}
\newcommand{\J}{{\cal{J}}}
\newcommand{\K}{{\cal{K}}}
\newcommand{\N}{{\cal{N}}}
\newcommand{\bx}{{\mathbf{x}}}
\newcommand{\bq}{{\mathbf{q}}}
\newcommand{\tU}{{\tilde{U}}}
\newcommand{\tu}{{\tilde{u}}}
\newcommand{\tx}{{\tilde{x}}}
\newcommand{\tbu}{{\tilde{\bu}}}
\def\setZ{\mathbb{Z}}
\def\setU{\mathbb{U}}
\def\setP{\mathbb{P}}
\def\setR{\mathbb{R}}
\def\expandafter\normalsize\expandafter{%
    \normalsize
    \setlength\abovedisplayskip{5pt}
    \setlength\belowdisplayskip{5pt}
    \setlength\abovedisplayshortskip{5pt}
    \setlength\belowdisplayshortskip{5pt}
}
\begin{document}
\title{Structure of solutions for continuous linear programs with constant coefficients}

\author{Evgeny Shindin\thanks{
Department of Statistics,
The University of Haifa,
Mount Carmel 31905, Israel; 
email  {\tt shindin@netvision.net.il, gweiss@stat.haifa.ac.il}.
Research supported in part by
Israel Science Foundation Grants 711/09 and 286/13.
}  \and
Gideon Weiss \samethanks
}

\maketitle

\begin{abstract}
We consider Continuous Linear Programs over a continuous finite time horizon $T$, with linear cost coefficient functions, linear right hand side functions, and a constant coefficient matrix, as well as their symmetric dual.
We search for optimal solutions in the space of measures or of functions of bounded variation.  
These models generalize the Separated Continuous Linear Programming models and their various duals, as formulated in the past by Anderson, by Pullan, and by Weiss.    In a recent paper we have shown that  under a Slater type condition, these problems possess optimal strongly dual solutions.   In this paper we give a detailed description of optimal solutions and define a combinatorial analog to basic solutions of standard LP.  We also show that feasibility implies existence of strongly dual optimal solutions without requiring the Slater condition.  We present several examples to illustrate the richness and complexity of these solutions.
\end{abstract}

\medskip
\begin{keywords} 
Continuous linear programming, symmetric dual, strong duality, structure of solutions, optimization in the space of measures, optimal sequence of bases
\end{keywords} 

\begin{AMS}
34H99,49N15,65K99,90C48
\end{AMS}

\pagestyle{myheadings}
\thispagestyle{plain}
\markboth{E. SHINDIN AND G. WEISS}{STRUCTURE OF SOLUTIONS FOR A CLASS OF CLP}


\pagenumbering{arabic}

\section{Introduction}
\label{sec.introduction}
This paper continues our research in a recent paper \cite{shindin-weiss:13} on continuous linear programs of the form
\begin{eqnarray}
\label{eqn.mpclp}
&\max & \int_{0-}^T (\gamma+ (T-t)c)\Tt dU(t)  \nonumber\\
\mbox{M-CLP}\,\quad & \mbox{s.t.} & \qquad A\, U(t)  \quad \le \beta + b t, 
\quad 0 \le t \le T, \\
 &&  U(t) \ge 0,\; U(t) \mbox{ non-decreasing and right continuous on [0,T].}\nonumber 
\end{eqnarray}
and their symmetric dual,
\begin{eqnarray}
\label{eqn.mdclp}
&\min & \int_{0-}^T (\beta+(T-t)b)\Tt dP(t)  \nonumber \\
\mbox{M-CLP$^*$}\quad & \mbox{s.t.} & \qquad  A\Tt P(t)  \quad \ge \gamma + c t,  
\quad 0 \le t  \le T, \\
 &&   P(t) \ge 0,\; P(t) \mbox{ non-decreasing and right continuous on [0,T].}\nonumber 
\end{eqnarray}
Here $A$ is a $K\times J$ constant matrix, $\beta,b,\gamma,c$ are constant vectors of corresponding dimensions and the integrals are Lebesgue-Stieltjes.  The unknowns are vectors of cumulative control functions  $U$ and $P$,  over the time horizon $[0,T]$.  By convention we take $U(0-)=0,\,P(0-)=0$.
It is convenient to think of the dual as running in reversed time, so that $P(T-t)$ corresponds to $U(t)$. We will denote by $x(t)$ and $q(t)$ the slacks in (\ref{eqn.mpclp}), (\ref{eqn.mdclp}), and refer to them as the primal and dual states.

These problems are special cases of continuous linear programs (CLP) formulated by Bellman in 1953 \cite{bellman:53}, and further discussed and investigated in \cite{anderson-nash:87,anderson-philpott:89,dantzig:51,grinold:70,levinson:66,pullan:93,pullan:95,pullan:96,pullan:97,pullan:00,weiss:08}. The mnemonic M-CLP is used to stress that we  look for solutions in the space of measures.

In \cite{shindin-weiss:13} it was shown that if the primal and dual satisfy a Slater type condition, then optimal solutions exist, and strong duality holds.  Furthermore, the optimal solutions can be chosen to be absolutely continuous on $(0,T)$ with jumps, i.e. impulse controls only at $0$ and $T$.  It was also pointed out that the M-CLP formulation generalizes the SCLP (separated continuous linear programs) of \cite{anderson:81,pullan:93,weiss:08}

In this paper we focus on a detailed description of the solutions of M-CLP and M-CLP$^*$, and we present detailed examples that illustrate the surprising richness and complexity of these solutions.

Our contributions in this paper are:
\begin{itemize}
\item
Identify an extreme point (vertex) of M-CLP with a finite sequence of bases of an
associated LP.
\item
Give a detailed description of the optimal solution as it is computed for a given base-sequence.
\item
Derive the validity region of a base-sequences as a convex polyhedral cone
of model parameters for which the base-sequence is optimal.
\item
Give a simple non-degeneracy criterion, and prove uniqueness of the solution when it holds.
\item
Show that strong duality holds for any feasible pair M-CLP/M-CLP$^*$, without requiring the Slater type condition of \cite{shindin-weiss:13}.
\item
Point out the relation of M-CLP to SCLP, and illustrate by examples how M-CLP can solve SCLP problems which are not strongly dual.
\item
Analyze in detail all one dimensional M-CLP problems, and illustrate a complete solution of a 2 dimensional M-CLP. 
\end{itemize}  
Further
research to develop a simplex-type algorithm for  M-CLP, based on our identification of vertices as base sequences,  is in progress.


\section{Preliminaries}
In this section we state some  definitions and 
 briefly summarize the results of  \cite{shindin-weiss:13} on M-CLP/M-CLP$^*$.

\subsubsection*{$\bullet$ Complementary slackness}\leavevmode
\begin{definition}
Solutions of  M-CLP and M-CLP$^*$ are said to be complementary slack if
\begin{equation}
\label{eqn.compslack}
 \int_0^T x(T-t)\Tt dP(t) =  \int_0^T q(T-t)\Tt dU(t) = 0.
\end{equation}
\end{definition}
Optimal solutions are always complementary slack, and any feasible complementary slack solutions are optimal.
\subsubsection*{$\bullet$ Feasibility and non-degeneracy}
Throughout the  paper we will make following assumption:
\begin{assumption}[{\bf Feasibility}]
\label{asm.feas}
Both M-CLP and M-CLP$^*$ are feasible for target time horizon $T$.
\end{assumption}
\begin{assumption}[{\bf Non-degeneracy I}]
\label{asm.nondegI}
The vector $b$ is in general position to
the matrix $\left[ A \; I  \right]$
(it is not a linear combination of any less than $K$ columns), and the vector 
$c$ is in general position to the matrix $\left[ A\Tt \; I \right]$.
\end{assumption}
\begin{assumption}[{\bf Non-degeneracy II}]
\label{asm.nondegII}
The vector $\beta$ is in general position to the matrix $\left[ A \; I  \right]$, 
and the vector $\gamma$ is in general position to the matrix $\left[ A\Tt \; I \right]$.
\end{assumption}
\subsubsection*{$\bullet$ Criterion for feasibility}\leavevmode
 \begin{theorem}[Theorem 3.2  in \cite{shindin-weiss:13}]
\label{the.feas}
M-CLP is feasible if and only if the following standard linear program Test-LP with unknown vectors $\bu, U$ is feasible. 
 \begin{eqnarray}
\label{eqn.ptestLP}
 &\max &\; z = (\gamma + c T)\Tt   \bu + \gamma\Tt U  \nonumber  \hspace{1.0in} \\
 \mbox{Test-LP} \hspace{0.2in}&\mbox{s.t.} &  A  \bu  \le \beta  \\
 &\mbox{} &  A  \bu + A U  \le \beta + bT \nonumber \\
&& \quad \bu,\, U  \ge 0  \nonumber 
\end{eqnarray}
\end{theorem}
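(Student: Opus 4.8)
The plan is to prove the equivalence by directly converting feasible points of one problem into feasible points of the other, the whole argument resting on the fact that the right-hand side $\beta + bt$ of M-CLP is affine in $t$. I would first note that feasibility of Test-LP depends only on its constraint set, so the objective $z$ plays no role here: it suffices, in each direction, to exhibit a pair of constant vectors $(\bu, U)$ with $\bu \ge 0$, $U \ge 0$, $A\bu \le \beta$, and $A\bu + AU \le \beta + bT$.

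For the ``only if'' direction, given a feasible $U(\cdot)$ for M-CLP I would set $\bu := U(0)$ --- the value of the cumulative control immediately after any impulse at the origin, where $U(0-) = 0$ by convention --- and let $U := U(T) - U(0)$, now viewed as a constant vector. Non-negativity and monotonicity of $U(\cdot)$ give $\bu \ge 0$ and $U \ge 0$; evaluating the primal constraint $A U(t) \le \beta + bt$ at $t = 0$ gives $A\bu \le \beta$, and at $t = T$ gives $A\bu + AU = A U(T) \le \beta + bT$. Hence $(\bu, U)$ is feasible for Test-LP.

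For the ``if'' direction, from a Test-LP-feasible $(\bu, U)$ I would build the candidate control $U(t) := \bu + \tfrac{t}{T}\, U$ on $[0,T]$, with $U(0-) = 0$. This is non-negative, non-decreasing (its slope $\tfrac{1}{T} U$ is $\ge 0$ and its only jump, of size $\bu \ge 0$, sits at the origin), and right-continuous. The remaining check is the state constraint: for $t \in [0,T]$, writing $\lambda := t/T \in [0,1]$, one has $A U(t) = (1-\lambda)\, A\bu + \lambda\, (A\bu + AU) \le (1-\lambda)\beta + \lambda(\beta + bT) = \beta + bt$, using the two Test-LP inequalities and the affine form of the bound. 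Thus $U(\cdot)$ is feasible for M-CLP, which closes the equivalence.

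I do not anticipate a genuine obstacle: the core of the argument is a one-line interpolation/convexity observation, and the only points needing care are the endpoint convention $U(0-) = 0$ (so that the free vector $\bu$ enters as a jump at time $0$ and the constraint at $t = 0$ reads exactly $A U(0) \le \beta$) and the observation that the forward direction uses only the endpoint values $U(0)$, $U(T)$ together with monotonicity, so even a highly irregular feasible control of M-CLP still yields a valid Test-LP point. The substantive work of the paper --- relating vertices of M-CLP to base-sequences, describing the computed solution, and the validity-region and uniqueness results --- is built on this feasibility characterization rather than contained in it.
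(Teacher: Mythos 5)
Your proof is correct and takes essentially the same route as the source: the theorem is quoted here from \cite{shindin-weiss:13}, where feasibility of Test-LP is obtained by evaluating the M-CLP constraint at the two endpoints $t=0$ and $t=T$, and conversely a Test-LP point is lifted to a feasible measure by an impulse $\bu$ at $0$ followed by the linear interpolation $U(t)=\bu+\tfrac{t}{T}U$, exactly as you do. The only cosmetic caveat is that your interpolation implicitly assumes $T>0$; the case $T=0$ is trivial.
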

\subsubsection*{$\bullet$ Slater type condition}\leavevmode
\begin{definition}
\label{asm.slater}
We say that M-CLP satisfies Slater type condition,  if (\ref{eqn.ptestLP}) has a feasible solution  $\bu, U'$  such that $\beta - A\bu \ge \alpha$ and $\beta + bT - A\bu - AU' \ge \alpha$ for some $\alpha>0$.
\end{definition} 
\subsubsection*{$\bullet$ Existence of strongly dual optimal solutions}\leavevmode
\begin{theorem}[Theorem 4.7 in \cite{shindin-weiss:13}]
\label{thm.strongduality}
Under Slater type condition \ref{asm.slater} both M-CLP and M-CLP$^*$ have optimal solutions, and there is no duality gap.
\end{theorem}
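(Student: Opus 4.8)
The plan is to prove weak duality first, then to use the Slater condition both for an a priori bound that gives existence of optimizers via weak-$*$ compactness, and, via a time-discretization/limiting argument, to close the duality gap.

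\textbf{Weak duality.} For any primal feasible $U$ with slack $x(t)=\beta+bt-AU(t)\ge 0$ and any dual feasible $P$ with slack $q(t)=A\Tt P(t)-\gamma-ct\ge 0$, a Lebesgue--Stieltjes integration by parts (using $U(0-)=P(0-)=0$ and keeping track of the time reversal $P(T-t)\leftrightarrow U(t)$) should yield
\begin{equation*}
\int_{0-}^T(\beta+(T-t)b)\Tt dP(t)-\int_{0-}^T(\gamma+(T-t)c)\Tt dU(t)=\int_0^T x(T-t)\Tt dP(t)+\int_0^T q(T-t)\Tt dU(t),
\end{equation*}
and the right-hand side is $\ge 0$ because each factor is nonnegative. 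Hence the primal value $V_P$ is at most the dual value $V_D$, any feasible complementary slack pair is optimal with no gap, and it remains only to produce such a pair.

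\textbf{Existence.} The Slater condition provides a dual feasible solution whose slack is uniformly bounded away from zero; substituting it into the identity above bounds $U(T)$, hence the total variation of $U$, in terms of the primal objective, for every primal feasible $U$, and symmetrically a Slater point of the primal bounds near-optimal $P$. On the resulting bounded sets the feasible regions are sequentially weak-$*$ compact in the spaces of $\setR^J$-valued, resp.\ $\setR^K$-valued, measures on $[0,T]$; the objective functionals are weak-$*$ continuous there since their integrands are continuous in $t$; and feasibility passes to the limit by right-continuity together with convergence at continuity points (a Helly-type selection). So $V_P$ and $V_D$ are finite and attained.

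\textbf{No gap.} This is the hard part. Discretize $[0,T]$ on a uniform grid $0=t_0<\cdots<t_n=T$, permitting atoms of the controls at grid points and densities that are linear in $t$ on each subinterval; M-CLP and M-CLP$^*$ then become finite linear programs $\mathrm{LP}_n,\mathrm{LP}_n^*$, which can be arranged so as to be ordinary LP duals of one another, whence $\mathrm{val}(\mathrm{LP}_n)=\mathrm{val}(\mathrm{LP}_n^*)$. Each is a restriction of its continuous counterpart, so $\mathrm{val}(\mathrm{LP}_n)\le V_P$ and $\mathrm{val}(\mathrm{LP}_n^*)\ge V_D$; the matching limits $\mathrm{val}(\mathrm{LP}_n)\to V_P$ and $\mathrm{val}(\mathrm{LP}_n^*)\to V_D$ follow once one can approximate an arbitrary $\varepsilon$-optimal feasible solution by a grid-adapted feasible one with objective changed by $o(1)$. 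That approximation is the main obstacle: a grid-adapted approximant of $U$ (or $P$) may violate the inequality constraint near points where the slack $x$ (or $q$) is small, so one must spend a controlled fraction of the slack afforded by the Slater condition to restore feasibility while still letting the objective perturbation vanish as $n\to\infty$, and the atoms of $U,P$ at $0$ and $T$ must be handled separately. Granting this, squeezing gives $V_P=\lim\mathrm{val}(\mathrm{LP}_n)=\lim\mathrm{val}(\mathrm{LP}_n^*)=V_D$, which together with the existence step completes the proof.
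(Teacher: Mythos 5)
This theorem is not proved in the present paper at all: it is imported verbatim as Theorem 4.7 of \cite{shindin-weiss:13} and used as a black box, so the only fair comparison is with the proof in that reference. Your overall strategy --- the symmetric integration-by-parts identity giving weak duality, Slater-based a priori bounds on the total variation of near-optimal controls followed by a Helly/weak-$*$ compactness argument for attainment, and a time discretization to close the gap --- is essentially the route taken there, and the weak duality and existence parts of your sketch are sound (modulo routine care with the diagonal terms in the double Stieltjes integral and with taking right-continuous versions of Helly limits).

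The genuine gap is in the ``no gap'' step, and it is worse than the missing approximation lemma you flag. As written, your three claims are mutually inconsistent: if $\mathrm{LP}_n$ and $\mathrm{LP}_n^*$ were exact finite LP duals (so $\mathrm{val}(\mathrm{LP}_n)=\mathrm{val}(\mathrm{LP}_n^*)$) \emph{and} each were a restriction of its continuous counterpart (so $\mathrm{val}(\mathrm{LP}_n)\le V_P\le V_D\le \mathrm{val}(\mathrm{LP}_n^*)$), the sandwich would already force $V_P=V_D=\mathrm{val}(\mathrm{LP}_n)$ for every fixed grid, with no limit and no Slater condition needed --- which is false whenever the optimal breakpoints lie off the grid, since then $\mathrm{val}(\mathrm{LP}_n)<V_P$ strictly. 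In fact the LP dual of the grid-restricted primal only imposes the dual inequality $A\Tt P(t)\ge\gamma+ct$ at (or averaged over) grid points, so it is a \emph{relaxation}, not a restriction, of M-CLP$^*$; one must give up one of the two properties, and the resulting discretization gap between the value of the restricted primal and the value of the restricted dual is precisely what has to be shown to vanish as the grid refines. That is where the Slater condition actually does its work (spending a controlled amount of the uniform slack to repair feasibility of grid-adapted approximants), and it is the step you explicitly defer with ``Granting this.'' Until that lemma is stated and proved --- including the separate treatment of the atoms at $0$ and $T$ and of the time reversal between primal and dual grids --- the argument does not establish $V_P=V_D$.
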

\subsubsection*{$\bullet$ Partial characterization of the solutions}\leavevmode
\begin{theorem}[Proposition 5.2 and Theorem 5.5. in \cite{shindin-weiss:13}]
\label{thm.solform}
If  M-CLP/M-CLP$^*$ have optimal solutions $U_O(t), P_O(t)$, then: 

\noindent
(i)  $c\Tt U_O(t)$ is concave piecewise linear on $(0, T)$ with a finite number of breakpoints. 

\noindent
(ii) There exists an optimal solution $U^*(t)$ which is continuous piecewise linear on $(0, T)$. 

\noindent
(iii) Under the non-degeneracy assumption  \ref{asm.nondegI}, every optimal solution is of this form, and furthermore, $U_O(t)-U_O(0)$ is unique over $(0,T)$.
\end{theorem}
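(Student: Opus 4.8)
The plan is to combine complementary slackness with a time-interchange (rearrangement) argument to get the concavity in (i), to read off the piecewise-linear structure from a finite family of ``bases'', and to use the non-degeneracy hypothesis for the uniqueness in (iii). First I would reformulate the objectives: Lebesgue--Stieltjes integration by parts, together with $U(0-)=0$ and the vanishing of $T-t$ at $t=T$, gives
\[
\int_{0-}^T(\gamma+(T-t)c)\Tt dU(t)=\gamma\Tt U(T)+\int_0^T c\Tt U(t)\,dt,
\]
and symmetrically the dual objective equals $\beta\Tt P(T)+\int_0^T b\Tt P(t)\,dt$, so apart from the boundary terms maximizing means making $\phi(t):=c\Tt U_O(t)$ large throughout $(0,T)$ --- which is why $\phi$ is the object to study. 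Given the hypothesis, fix the optimal solutions $U_O,P_O$; these are complementary slack, and by the result of \cite{shindin-weiss:13} quoted in the introduction they may be taken absolutely continuous on $(0,T)$ with impulses only at $0$ and $T$. I write $u=\dot U_O\ge 0$, $p=\dot P_O\ge 0$, $x(t)=\beta+bt-AU_O(t)$, $q(s)=A\Tt P_O(s)-\gamma-cs$.

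For (i) I would first record a complementary-slackness identity. From (\ref{eqn.compslack}), $q_j(T-t)u_j(t)=0$ and $x_k(t)p_k(T-t)=0$ for a.e.\ $t\in(0,T)$. Differentiating the first relation on an interval where $q_j(T-t)=0$ gives $(A\Tt p(T-t))_j=c_j$ whenever $u_j(t)>0$, hence $c\Tt u(t)=p(T-t)\Tt Au(t)=p(T-t)\Tt(b-\dot{x}(t))$; and $p(T-t)\Tt\dot{x}(t)=0$ a.e., because for each $k$ either $p_k(T-t)=0$ or $x_k$ vanishes identically near $t$ so $\dot{x}_k(t)=0$. Thus $c\Tt u(t)=b\Tt p(T-t)$ a.e., and integrating, $c\Tt U_O(t)+b\Tt P_O(T-t)$ is constant on $(0,T)$; consequently concavity of $c\Tt U_O$ there is \emph{equivalent} to convexity of $b\Tt P_O$, which is the mirror image of the same claim for the symmetric dual, so it suffices to establish the shape statement for one of the two problems. (A second complementary-slackness argument shows that for a general optimal $U_O$ the function $c\Tt U_O$ is continuous on $(0,T)$ and differs there from the absolutely continuous representative only by a constant, so nothing is lost in assuming $U_O$ absolutely continuous.) To prove the shape statement I argue by contradiction: if $c\Tt u$ failed to be non-increasing, there would be sets of positive measure near some $s_1<s_2$ with $c\Tt u(s_1)<c\Tt u(s_2)$, and moving a small amount of control increment from near $s_2$ back to near $s_1$ would raise $c\Tt U$, hence the objective, on the interval between them, while leaving $U(0),U(T-),U(T)$ fixed; the identity above guarantees that the increments being moved lie on the face of the currently active constraints, so feasibility of the perturbed trajectory is governed by the differentiated (rate) form of those constraints and is preserved if the perturbation and its time support are small enough. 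This contradicts optimality and proves concavity.

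For the piecewise-linear structure in (i) and (ii), at a.e.\ $t\in(0,T)$ the rate $u(t)$ may be taken to be a vertex of the polyhedron of non-negative rates consistent with the currently active primal constraints (those $k$ with $x_k(t)=0$ must have $(b-Au)_k\ge 0$) and with complementary slackness against $p(T-t)$; this polyhedron depends on $t$ only through the finite combinatorial datum of which primal and dual inequalities are active --- a ``base'' in the sense developed later in this paper. Over each maximal interval on which this datum is constant one may take $u(t)$ equal to a single such vertex, so $U_O$ is affine there, and by the identity above $c\Tt U_O$ is unchanged; this yields the continuous piecewise-linear optimal $U^*$ of (ii). Finiteness of the number of breakpoints follows because the slopes $c\Tt u$ strictly decrease across breakpoints (by the concavity just proved) and take values in the finite set determined by the finitely many bases, and because along an optimal trajectory the active set can change only finitely often. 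For (iii), under Assumption \ref{asm.nondegI} the vector $b$ (resp.\ $c$) is in general position to $[A\ I]$ (resp.\ $[A\Tt\ I]$), so every such polyhedron is non-degenerate and its relevant vertex, i.e.\ the value of $u(t)$, is uniquely determined; hence $u$, and therefore $U_O(t)-U_O(0)$, is unique on $(0,T)$ and every optimal solution has the piecewise-linear form. The impulses at $0$, and so $U_O(0)$ itself, need not be unique, which is why only $U_O(t)-U_O(0)$ is claimed.

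The main obstacle is the feasibility of the time-interchanged trajectory in the concavity step when $A$ is sign-indefinite and the active-constraint set moves with $t$; the whole point of the complementary-slackness identity is to confine the moved increments to the active face, so that feasibility reduces to the differentiated constraints. An alternative that bypasses this delicate step is to discretize the horizon, solve the resulting finite linear programs --- whose on-grid optimal solutions have the required concavity and piecewise-linearity --- and pass to the limit, as is done for SCLP in \cite{weiss:08,pullan:93}; then bounding the number of breakpoints uniformly in the discretization becomes the delicate point. Everything else is bookkeeping with the finitely many bases.
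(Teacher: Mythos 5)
This theorem is not proved in the present paper at all: it is imported verbatim from \cite{shindin-weiss:13} (Proposition 5.2 and Theorem 5.5 there), so there is no in-paper proof to match. Judged on its own merits, your reconstruction gets the bookkeeping for (ii)--(iii) roughly right and in the same spirit in which the present paper later uses the result (rates are complementary-slack basic solutions of Rates-LP/LP$^*$; finitely many admissible bases; non-degeneracy of $b$ and $c$ forces a unique optimal basis per interval, hence uniqueness of $u(t)$ and of $U_O(t)-U_O(0)$). The identity $c\Tt u(t)=b\Tt p(T-t)$ a.e.\ is also correct and useful.

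The genuine gap is in the concavity step of (i), which is the heart of the theorem. First, your perturbation does not obviously improve the objective: relocating an increment $\delta$ from near $s_2$ to near $s_1$ changes the objective by $(s_2-s_1)\,c\Tt\delta$, which is positive only if $c\Tt u(s_2)>0$, not merely if $c\Tt u(s_2)>c\Tt u(s_1)$; you need a genuine swap of the increments at $s_1$ and $s_2$. Second, and more seriously, feasibility of the swapped trajectory is not established: for every $k$ with $x_k=0$ somewhere in $(s_1,s_2)$ you would need $\bigl(A(u(s_2)-u(s_1))\bigr)_k\le 0$, and the complementary-slackness identity $c\Tt u=b\Tt p(T-\cdot)$ is a statement about objective accumulation rates, not about feasible directions --- it does not ``confine the moved increments to the active face.'' The standard route, which is what the cited proof exploits, avoids all of this: because the right-hand side $\beta+bt$ is \emph{affine} in $t$, the linear time-interpolation $\tilde U(t)=\lambda U(t_1)+(1-\lambda)U(t_2)$ on $[t_1,t_2]$ is automatically feasible and non-decreasing, and choosing the subinterval on which $c\Tt U$ lies strictly below its chord yields a strict increase of $\int_0^T c\Tt U(t)\,dt$, contradicting optimality. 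No complementary slackness and no absolute continuity of $U_O$ are needed --- which matters, because your argument assumes at the outset that $U_O$ is absolutely continuous on $(0,T)$ (a consequence of the Slater-type Theorem \ref{thm.strongduality}, which is not a hypothesis of Theorem \ref{thm.solform}), and your parenthetical claim that a general optimal $U_O$ differs from such a representative only by a constant is asserted rather than proved; continuity of $c\Tt U_O$ on $(0,T)$ is part of what concavity is supposed to deliver. Finally, your finiteness-of-breakpoints argument leans on ``the active set can change only finitely often,'' which is circular; the correct deduction is that a concave function whose derivative takes values in the finite set of Rates-LP objective values is automatically piecewise linear with finitely many breakpoints.
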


\section{Detailed description of the solution}


In this section we give a detailed description of the solution.  We start by discussing the proposed form of the solution, and define {\em base sequences} in Section \ref{sec.basesequenses}.  These base sequences play a role analogous to bases in standard linear programming.
We then formulate and prove in Section \ref{sec.structurethm} {\em the  structure theorem} (Theorem \ref{thm.structure}), stating  that a solution is optimal if and only if it is obtained from a base sequence.  Two useful corollaries on {\em boundary values} then follow in Section \ref{sec.boundary}.  In Section \ref{sec.uniqueness} we show that under non-degeneracy assumption the solution is unique. Finally in Section \ref{sec.validity} we define {\em validity regions} for base sequences and show that they are given  by a convex polyhedral cone of the parameters of the problem.

\subsection{Base sequences}
\label{sec.basesequenses}

We consider solutions of M-CLP/M-CLP$^*$ that consist of impulse controls $\bu^0=U(0),\,\bu^N=U(T)-U(T-),\,\bp^0=P(T)-P(T-),\,\bp^N=P(0)$ at $0$ and $T$,  piecewise constant control rates $u(t)=\frac{dU(t)}{dt},\,p(t)=\frac{dP(t)}{dt}$, and continuous piecewise linear states $x(t)=\beta+bt-U(t),\,q(t)=\gamma+ct-P(t)$ 
with possible discontinuities at $T$.  We refer to the set of values of controls and states at $t=0$ and $t=T$ as {\em the boundary values}.  
The time horizon $[0,T]$ is partitioned by $0=t_0 < t_1 < \ldots < t_N=T$ which are the breakpoints in the rates $u,p$  and in the slopes of $x,q$.  We denote the values of the states at the breakpoints by   $x^n = x(t_n),\, n=0,\ldots,N-1$,
and $q^n=q(T-t_n)$, $n=1,\ldots,N$.  Because there may be a discontinuity at $T$ we denote the values at $T$ itself by $\bx^N=x(T)$, $\bq^0=q(T)$, and let $x^N=x(T-)$, $q^0=q(T-)$ be the values of the limit as $t\nearrow T$. 
The constant slopes of the states and the constant values of the control rates for each interval are denoted $\dx^n=\frac{d x(t)}{dt},\,u^n=u(t),\, t_{n-1}<t<t_n$ and  $\dq^n=\frac{d q(t)}{dt},\,p^n=p(t),\, T-t_n<t<T-t_{n-1}$.  Recall that the dual is running in reverse time, though we have labelled the intervals 
$n=1,\ldots,N$ in the direction of the primal problem.

For solutions of this form the solution is fully described by giving the boundary values, the rates for each interval, and the lengths of the  intervals $\tau_m=t_m-t_{m-1}$.  We will now discuss equations to calculate these.

The rates $u^n,\dx^n,p^n,\dq^n$ are complementary slack basic solutions of the following pair of 
LP problems
\begin{equation}
\label{eqn.prates}
\begin{array}{rcl}
 & \max &  c\Tt u  \\
 &\mbox{s.t.}& A u +  \dx = b \\
\mbox{Rates-LP} \quad&& u_j \in \setZ \text{ for } j \notin \J\; u_j \in \setP \text{ for } j \in \J \\
&& \dx_k \in \setU \text{ for } k \notin \K \; \dx_k \in \setP \text{ for } k \in \K 
\end{array} 
\end{equation}
\begin{equation}
\label{eqn.drates}
\begin{array}{rcl}
 & \min &  b\Tt p  \\
&\mbox{s.t.}& A\Tt p -  \dq = c \\
\mbox{Rates-LP$^*$}\quad && p_k \in \setZ \text{ for } k \notin \K\; p_k \in \setP \text{ for } k \in \K \\
&& \dq_j \in \setU \text{ for } j \notin \J \; \dq_j \in \setP \text{ for } j \in \J
\end{array}
\end{equation}
where by $\setZ, \setP, \setU$ we denote the following sign restrictions: $\setZ = \{0\}$ is zero, $\setP = \setR_+$ is non-negative and $\setU = \setR$ is unrestricted. The sign restrictions, which are determined by the sets of indexes $\K,\J$, vary from interval to interval.

We say that a basis $B$ of Rates-LP and the corresponding complementary slack dual basis $B^*$ of Rates-LP$^*$ are admissible if $u,p \ge 0$.  We say that two bases $B_n,B_{n+1}$ are adjacent if we can go in a single pivot operation from one to the other.   The piecewise constant values of $u(t),\dx(t),p(t),\dq(t)$ in the solution are given by a sequence of bases $B_1,\ldots,B_N$ which are admissible and adjacent.  We let:  
$\K_n=\{k: \dx_k^n \ne 0\}$, $\J_n=\{j: \dq_j^n \ne 0\}$, so that the basic variables of the primal Rates-LP for the $n$th interval are $\dx_k,u_j$ where $k\in \K_n,\,j\in \overline{\J}_n$.  In the pivot  $B_n \to B_{n+1}$, $v_n$  leaves the basis and $w_n$ enters the basis.  The values of $\dx^n,u^n,\dq^n,p^n$ can then be calculated from (\ref{eqn.prates}, \ref{eqn.drates}) once $B_1,\ldots,B_N$ are given. 

Next we have {\em time interval equations}.   If $v_n=\dx_k$ then $x_k(t_n)=0$, and if 
$v_n=u_j$ then $q_j(T-t_n)=0$.  We then get the following $N-1$ equations for $\tau_1,\ldots,\tau_N$:
 \begin{eqnarray}
\label{eqn.lineq1}
&\displaystyle x_k(t_n)=x^0_k + \sum_{m=1}^n  \dx^m_k \tau_m = 0, &\mbox{ if } v_n = \dx_k, 
\nonumber \\
&\displaystyle q_j(T-t_n)=q^N_j + \sum_{m={n+1}}^N  \dq^m_j \tau_m = 0, &\mbox{ if } v_n = u_j.
\end{eqnarray}
An additional equation is:
\begin{equation}
\label{eqn.sumtau}
\sum_{n=1}^N \tau_n = T.
\end{equation}

It remains to find the boundary values.   We let $\K_0 = \{k: x_k^0 > 0 \}$, $\J_0 = \{j: \bu_j^0 = 0\}$,  and 
$\J_{N+1} = \{j: q_j^N > 0\}$,  $\K_{N+1} = \{k: \bp_k^N = 0 \}$.   We say that $B_1$ and $\K_0$ are compatible if  $\K_0\subseteq \K_1$, 
and similarly, $B_N$ and $\J_{N+1}$ are compatible if  $\J_{N+1} \subseteq \J_N$.
The solution needs to satisfy these compatibility conditions.

By the definition of $\K_0,\J_0,\K_{N+1},\J_{N+1}$ and by complementary slackness,
\begin{equation}
\label{eqn.compatible}
\begin{array}{lllll}
\bu^0_j=0, &  j\in \J_0  &                                 \qquad \qquad \bp^N_k=0,  &  k\in \K_{N+1},  \\
x^0_k=0, &   k \not\in \K_0, &                          \qquad \qquad   q^N_j = 0, &  j\not\in \J_{N+1},  \\
\bp^0_k=0,  &  k\in \K_{0}, &  			 \qquad \qquad \bu^N_j=0, & j\in \J_{N+1}, \\
\bq^0_j = 0, & j\not\in \J_0, &			 \qquad \qquad \bx^N_k= 0, & k\not\in \K_{N+1}.
\end{array}
\end{equation}
This determines the value $0$ for $2K + 2J$ of the boundary variables.  
We note however that in contrast to $|\K_n|+|\overline{\J}_n|=K$ and $|\overline{\K}_n|+|\J_n|=J$ for $B_1,\ldots,B_N$,
all that we can say for the boundary values is that   $|\K_0|+|\overline{\J_0}|=K+L_1,\; |\overline{\K_0}|+|\J_0|=J-L_1$
and similarly 
 $|\J_{N+1}|+|\overline{\K_{N+1}}|=J+L_2,\;|\overline{\J_{N+1}}|+|\K_{N+1}|=K-L_2$. One can see that by assumption \ref{asm.nondegII} we always have $L_1\ge0,\,L_2\ge0$.
 
The remaining $2K +2J$ boundary values need to satisfy two sets of equations. The {\em  first boundary equations} relate to the constraints at time $0$ for the primal and the dual problem:
 \begin{equation}
\label{eqn.boundary1}
\begin{array}{llll}
A \bu^0  &+ x^0 = \beta, \quad  & \quad   A\Tt \bp^N  &- q^N = \gamma 
\end{array}
\end{equation}
The  {\em second boundary equations} relate to the discontinuities of $x$ and $q$ at $T$:
\begin{equation}
\label{eqn.boundary2}
A \bu^N  +  \bx^N - x^N = 0, \qquad  \qquad   A\Tt \bp^0   -  \bq^0 + q^0= 0.
\end{equation}

We can replace $x^N,\,q^0$ in (\ref{eqn.boundary2}) by  $x^N=x^0+\sum_{n=1}^N \dx^n \tau_n$ and $q^0=q^N+\sum_{n=1}^N \dq^n \tau_n$.  In this form the second boundary equations (\ref{eqn.boundary2}) together with first boundary equations (\ref{eqn.boundary1}) and time interval equations (\ref{eqn.lineq1}), (\ref{eqn.sumtau}) provide a set of $2K + 2J + N$ equations determining the $2K + 2J$ boundary values and the $N$ time interval lengths. 

We refer to the sequence $B_1,\dots,$ $ B_N$ together with $\J_0, \K_0, \J_{N+1}, \K_{N+1}$, or equivalently to the index sets $\K_n,\,\J_n,\,n=0,\ldots,N+1$ as the {\em base sequence}.


\subsection{The structure theorem}\leavevmode
\label{sec.structurethm}
\begin{theorem}[Structure Theorem] 
\label{thm.structure} 
(i) Consider the M-CLP, M-CLP$^*$ problems (\ref{eqn.mpclp}), (\ref{eqn.mdclp}) and assume Non-Degeneracy Assumption \ref{asm.nondegI}. 
Let $B_1,\ldots,B_N$ be admissible adjacent bases with rates $u^n,\dx^n,p^n,\dq^n,\,n=1,\ldots,N$, and let 
$\K_0, \J_{N+1}$ be compatible with $B_1,B_N$.  
 Let $\bu^0,x^0,\bp^0,\bq^0$, $\bp^N,q^N,\bu^N,\bx^N$ and $\tau$ be a solution of  (\ref{eqn.lineq1})--(\ref{eqn.boundary2}), and let $u(t),x(t),p(t),q(t)$ be constructed from these boundary values, time intervals, and rates.  If all the boundary values, all the intervals, all the values $x^n,q^n$  at the breakpoints,  and the limit values  $x^N=x(T-),\,q^0=q(T-)$ 
 are $\ge0$, then this is an optimal solution.\\
(ii) Conversely, if problems (\ref{eqn.mpclp}), (\ref{eqn.mdclp}) are feasible, then there exists an optimal solution given by a sequence of admissible adjacent bases $B_1,\ldots,B_N$ and compatible boundary sets $\K_0, \J_0,$ $\K_{N+1}, \J_{N+1}$.
\end{theorem}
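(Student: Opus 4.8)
\medskip\noindent\textbf{Proof proposal.}
For part~(i) the plan is to verify that the pair $\big(U(t),P(t)\big)$ reconstructed from the base sequence is feasible for M-CLP and M-CLP$^*$ and is complementary slack in the sense of~(\ref{eqn.compslack}); since every feasible complementary slack pair is optimal, this settles~(i). Feasibility is bookkeeping: the impulses $\bu^0,\bu^N,\bp^0,\bp^N$ are $\ge0$ by hypothesis and the rates $u^n,p^n$ are $\ge0$ by admissibility of $B_n$, so $U,P$ are non-negative, non-decreasing and right-continuous; on the $n$th interval $\frac{d}{dt}\big(\beta+bt-AU(t)\big)=b-Au^n=\dx^n$ by the equality constraint of Rates-LP~(\ref{eqn.prates}), and the first and second boundary equations~(\ref{eqn.boundary1}),~(\ref{eqn.boundary2}) force $\beta+bt-AU(t)$ to equal the reconstructed state $x(t)$ on all of $[0,T]$, jump at $T$ included; since $x$ is then affine on each subinterval with endpoint data among $x^n,\bx^N,x^N$, all assumed $\ge0$, linear interpolation gives $x(t)\ge0$ throughout, i.e.\ $AU(t)\le\beta+bt$, and symmetrically $A\Tt P(t)\ge\gamma+ct$.

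The real content of~(i) is complementary slackness. The impulse contributions at $0$ and $T$ to the two integrals in~(\ref{eqn.compslack}) vanish term by term from the relations~(\ref{eqn.compatible}): for instance the atom of $P$ at primal time $T$ contributes $\sum_k x^0_k\bp^0_k$, and for each $k$ either $k\in\K_0$, so $\bp^0_k=0$, or $k\notin\K_0$, so $x^0_k=0$; the other three impulse terms are handled the same way using $\K_{N+1},\J_0,\J_{N+1}$. For the absolutely continuous parts it suffices to show $q_j(T-t)\,u^n_j\equiv0$ and $x_k(t)\,p^n_k\equiv0$ on each interval $n$. If $u^n_j>0$ then, $u^n$ and $\dq^n$ being complementary slack, $\dq^n_j=0$, so $j\in\overline{\J}_n$, $q_j$ is constant on the $n$th (dual) interval, and $u_j$ is a basic variable of $B_n$. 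Let $[n',n'']\ni n$ be the maximal block of consecutive intervals on which $u_j$ is basic, equivalently on which $\dq_j=0$; then $q_j$ takes a single constant value $\kappa$ across this block. If $n''<N$, then $u_j$ leaves the basis at the pivot $B_{n''}\to B_{n''+1}$, i.e.\ $v_{n''}=u_j$, and the time-interval equation~(\ref{eqn.lineq1}) gives $q_j(T-t_{n''})=0$, whence $\kappa=0$. If $n''=N$, then $j\in\overline{\J}_N$, so by compatibility of $B_N$ with $\J_{N+1}$ we have $j\notin\J_{N+1}$, hence $q^N_j=0$ (using $q^N\ge0$), again $\kappa=0$. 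Thus $q_j\equiv0$ on the $n$th interval and the term vanishes; $x_k\,p^n_k\equiv0$ is the mirror statement, obtained by running the block to its left end and using the compatibility of $B_1$ with $\K_0$ together with the time-interval equations there. This proves~(i). (Non-Degeneracy Assumption~\ref{asm.nondegI} enters only to make the Rates-LP/Rates-LP$^*$ basic solutions non-degenerate, so that $\K_n,\J_n$ are exactly the basic index sets of $B_n$.)

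For part~(ii) I would first treat the case where the Slater type condition~\ref{asm.slater} holds for both problems. Then Theorems~\ref{thm.strongduality} and~\ref{thm.solform} supply an optimal pair that is continuous and piecewise linear on $(0,T)$ with finitely many breakpoints $0=t_0<\dots<t_N=T$ and impulses only at $0,T$. On the $n$th linear piece the constant data satisfy $Au^n+\dx^n=b$, $A\Tt p^n-\dq^n=c$, and, by complementary slackness of the optimal pair, $u^n_j\dq^n_j=0$ and $\dx^n_kp^n_k=0$; under~\ref{asm.nondegI} this determines a basis $B_n$ of Rates-LP and its complementary dual basis, admissible since $u^n,p^n\ge0$. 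At an interior breakpoint some state coordinate reaches $0$, and non-degeneracy forces a single change of basic variable, so $B_n,B_{n+1}$ are adjacent. Taking $\K_0=\{k:x^0_k>0\}$, $\J_0=\{j:\bu^0_j=0\}$ and likewise $\K_{N+1},\J_{N+1}$, complementary slackness of the optimal pair applied on the first and last intervals yields $\K_0\subseteq\K_1$ and $\J_{N+1}\subseteq\J_N$; so the optimal solution is carried by a base sequence.

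It remains to drop the Slater assumption. Given only feasibility, perturb $\beta,b$ to $\beta+\epsilon\mathbf{1},b+\epsilon\mathbf{1}$ and $\gamma,c$ to $\gamma-\epsilon\mathbf{1},c-\epsilon\mathbf{1}$; since the data of~(\ref{eqn.mpclp}) does not involve $\gamma,c$ and that of~(\ref{eqn.mdclp}) does not involve $\beta,b$, both perturbed problems stay feasible and are now strictly feasible, so the Slater type condition~\ref{asm.slater} holds, and for generic small $\epsilon$ Non-Degeneracy~\ref{asm.nondegI} persists. The previous paragraph gives, for each such $\epsilon$, a base sequence carrying an optimal solution; since only finitely many base sequences can occur (the number of intervals being bounded in terms of $K$ and $J$), one base sequence $\mathcal{S}$ recurs along some $\epsilon_i\downarrow0$. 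For this fixed $\mathcal{S}$ the rates are linear in $(b,c)$ and equations~(\ref{eqn.lineq1})--(\ref{eqn.boundary2}) form a linear system in the boundary values and interval lengths with coefficients continuous in the parameters; letting $\epsilon_i\to0$ the solutions converge to a solution of~(\ref{eqn.lineq1})--(\ref{eqn.boundary2}) for the original data, and all the sign requirements of part~(i), being closed, pass to the limit. By part~(i) this limit is an optimal solution of base-sequence form. The main obstacles I expect are the block argument in~(i) --- pinning a conjugate dual (resp.\ primal) state to zero on a whole interval wherever the primal (resp.\ dual) rate is positive --- and, in~(ii), the uniform bound on the number of intervals and the boundedness needed to pass to the limit.
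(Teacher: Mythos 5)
Your overall architecture matches the paper's: part (i) by verifying feasibility plus complementary slackness of the reconstructed pair, part (ii) by extracting a base sequence from the optimal solution of Theorem \ref{thm.solform} under the Slater condition and then removing that condition by perturbation. Your block argument for interior complementary slackness (propagating $q_j\equiv 0$ across the maximal run of intervals on which $u_j$ is basic, terminating either at a time-interval equation $q_j(T-t_{n''})=0$ or at the compatibility condition $\J_{N+1}\subseteq\J_N$) is correct and is actually a self-contained replacement for the paper's citation of the strict complementary slackness (\ref{eqn.compslack2}) from Weiss's SCLP paper; it also only uses the one-directional implication that is actually needed. That part is fine.

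There are, however, two concrete gaps in part (ii). First, your claim that ``non-degeneracy forces a single change of basic variable, so $B_n,B_{n+1}$ are adjacent'' is not justified: Assumption \ref{asm.nondegI} makes the basic solutions of Rates-LP/LP$^*$ non-degenerate, but it does not prevent two state coordinates (or a state and a dual state) from hitting zero at the same breakpoint, so consecutive bases of the optimal solution need not differ by one pivot. The paper does not claim adjacency; it explicitly inserts a chain of intermediate admissible bases (with zero-length intervals) to pass from $B_n$ to $B_{n+1}$ when they are not adjacent, and your proof needs the same device. Second, your limiting argument as $\epsilon_i\downarrow 0$ requires the solutions of the linear system (\ref{eqn.lineq1})--(\ref{eqn.boundary2}) to converge along the subsequence, which you do not establish: if the coefficient matrix degenerates in the limit the boundary values and interval lengths could be unbounded. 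The paper avoids this by phrasing the question through the validity region (Theorem \ref{thm.conves}): the set of parameters $(\beta,\gamma,T)$ for which a fixed base sequence is optimal is the image of the non-negative orthant under a linear map intersected with coordinate planes, hence a closed polyhedral cone, so membership of the limit point follows from closedness without any boundedness of the preimages. You flagged this obstacle yourself, but it does need to be closed, either by the polyhedral-cone argument or by an explicit a priori bound on the solutions.
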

\begin{proof}
(i) The proof  is very similar to the proof of part (i) of the structure theorem in \cite{weiss:08}.  
We have $\tau \ge 0$ and by (\ref{eqn.sumtau})  they add up to $T$. Hence $0 = t_0 < t_1 < \dots < t_N = T$ is a partition of $[0, T ]$, $u(t), \dx(t), p(t), \dq(t)$ are well defined (at all but the breakpoints) piecewise constant and $x(t), q(t)$ are well defined continuous piecewise linear on $[0,T]$ with a possible discontinuity at $T$.

To show optimality we need to show that  $\bu^0, u(t), \bu^N, x(t), \bp^N, p(t), \bp^0, q(t)$, satisfy the constraints of (\ref{eqn.mpclp}), (\ref{eqn.mdclp}) as equalities, they are non-negative, and they are complementary slack as in (\ref{eqn.compslack}).

The primal and dual constraints hold at $t = 0$ by (\ref{eqn.boundary1}), hold for all $0 < t < T$ by integrating both sides of the constraints which involve $\dx, \dq$ in (\ref{eqn.prates}, \ref{eqn.drates}) from $0$ to $t$ and hence hold also at $t=T$ by (\ref{eqn.boundary2}).

Since $B_1,\ldots,B_N$ are admissible $u(t), p(t) \ge 0$.  That $x(t),q(t) \ge 0$ follows from  $x^n,q^n\ge 0,\,n=0,\ldots,N$ and 
$\bx^N,\bq^0 \ge 0$.

Next we show complementary slackness at all but the breakpoints. This is where we need to use the Non-Degeneracy Assumption (\ref{asm.nondegI}), as a result of which the following strict complementary slackness holds for all $t$ except the breakpoints:
\begin{equation}
\label{eqn.compslack2}
\begin{array}{lllllll}
x_k(t) > 0     & \iff &  \dx_k(t) \neq  0       & \iff & p_k(T-t)=0, & k=1,\dots,K,  \\
q_j(T-t) > 0   & \iff &  \dq_j(T-t) \neq  0    & \iff & u_j(t)=0,     & j=1,\dots,J.  \\
\end{array}
\end{equation}
The proof of (\ref{eqn.compslack2}) is given in the proof of Theorem 3 in \cite{weiss:08}.
%

Finally,  complementary slackness holds at time $t=0$ and $t=T$ by (\ref{eqn.compatible}).

(ii) At this point we assume the Non-Degeneracy Assumption \ref{asm.nondegI} and Slater-type condition \ref{asm.slater}. We complete the proof, without these assumptions in Section \ref{sec.strong}. 
Let $U^*(t), P^*(t)$ be a pair of optimal solutions, as described in Theorem \ref{thm.solform}, with piecewise constant rates $u(t),p(t)$ and piecewise linear slacks $x(t), q(t)$, with breakpoints $0 =t_0 < t_1 < \dots < t_N = T$. We will construct an optimal base sequence from these solutions.

In each interval $u, \dx, p, \dq$ are optimal solutions of Rates-LP/LP$^*$ (\ref{eqn.prates}, \ref{eqn.drates}). By feasibility, 
complementary slackness and non-degeneracy, $u(t),\dx(t),p(t),\dq(t)$ must be  basic solutions of the Rates-LP/LP$^*$ problems (\ref{eqn.prates}, \ref{eqn.drates}), with admissible bases $B_1,\ldots,B_N$.    If these are adjacent, the proof is complete.  Else, if $B_n,B_{n+1}$ are not adjacent we can go from $B_n$ to $B_{n+1}$ in a sequence of pivots, preserving the admissibility.  In this way we will have a new sequence of bases $B_{1'},\ldots,B_{N'}$ which are feasible and adjacent.  The boundary values of 
$U^*(t), P^*(t)$ will then also determine $\K_0, \J_0, \K_{N+1}, \J_{N+1}$, where $\K_0, \J_{N+1}$ are compatible with $B_1, B_N$, because  $x(t), q(T-t)$ are right-continuous functions.
\end{proof}

It is seen from the structure theorem that the solution is determined by the base sequence $\K_n,\,\J_n,$ $n=0,\ldots,N+1$, and if the  conditions of the structure theorem are satisfied we call it {\em an optimal base sequence}.  We  refer to the constructed boundary values $\bu^0, \bu^N, x^0, \bx^N,$ $\bp^N, \bp^0, q^N, \bq^0$, the time partition $0=t_0 < t_1 < \dots < t_{N-1} < t_N = T$, and the control rates and states 
$u(t), x(t), p(t), q(t)$ as the optimal solution corresponding to the optimal base sequence.

Several Corollaries of Theorem 3 in \cite{weiss:08} are also valid here. In particular by Corollary 4  in \cite{weiss:08}, the objective values of Rates-LP/LP$^*$ are strictly decreasing.  This gives an upper bound on the number of intervals, $N \le \left(\begin{smallmatrix} K+ J \\ K \end{smallmatrix}\right)$. 


\subsection{Boundary values}
\label{sec.boundary}

The following corollaries determine some properties of\, the  boundary values, which are used later in the proof of uniqueness in Section \ref{sec.uniqueness}.
\begin{corollary}
\label{thm.boundaryLP}
Let $\check{U}=\int_0^T u(t) dt$, and $\check{P}=\int_0^T p(t) dt$.  Then $\bu^0,\bu^N$ and $\bp^0,\bp^N$ are optimal primal and dual solutions of the pair of dual LP problems:
\begin{equation}
\label{eqn.boundaryoptprim}
\mbox{Boundary-LP$(\check{U},\check{P},T)$ }\quad 
\begin{array}{ll}
\max & (\gamma +c T - A\Tt \check{P})\Tt \bu^0 + \gamma\Tt \bu^N   \\
\mbox{s.t.}& A \bu^0       \qquad \quad           \le \beta,  \\
                 & A \bu^0  + A \bu^N  \le \beta +  bT - A \check{U},  \\
& \bu^0,\,\bu^N \ge 0
\end{array}  \hspace{0.7in}
\end{equation}
\begin{equation}
\label{eqn.boundaryoptdual}
\mbox{Boundary-LP$^*(\check{U},\check{P},T)$}\quad 
\begin{array}{ll}
\max & (\beta +b T - A \check{U})\Tt \bp^N + \beta\Tt \bp^0   \\
\mbox{s.t.}&  A\Tt \bp^N       \qquad \quad           \ge \gamma,  \\
                 & A\Tt \bp^N  + A\Tt \bp^0  \ge \gamma +  c T - A\Tt \check{P}, \\
& \bp^N,\,\bp^0 \ge 0
\end{array}  \hspace{0.5in}
\end{equation}
\end{corollary}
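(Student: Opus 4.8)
The plan is to verify the LP optimality conditions directly from the structure theorem. Write $\check U = \int_0^T u(t)\,dt = \sum_{n=1}^N u^n\tau_n$ and $\check P = \int_0^T p(t)\,dt = \sum_{n=1}^N p^n\tau_n$, and similarly note that $x^N = x^0 + \sum_n \dx^n\tau_n = \beta + bT - \bu^0 - A\check U$ (using $A u^n + \dx^n = b$ from Rates-LP) and $q^0 = q^N + \sum_n \dq^n\tau_n = \gamma + cT - \bp^N + A\Tt\check P$ (using $A\Tt p^n - \dq^n = c$ from Rates-LP$^*$). First I would show primal feasibility of $\bu^0,\bu^N$ in Boundary-LP$(\check U,\check P,T)$: the first constraint $A\bu^0 + x^0 = \beta$ with $x^0\ge 0$ is exactly the first boundary equation (\ref{eqn.boundary1}); the second constraint follows from the second boundary equation (\ref{eqn.boundary2}), $A\bu^N + \bx^N - x^N = 0$, upon substituting the expression for $x^N$ above and using $\bx^N\ge 0$. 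The nonnegativity $\bu^0,\bu^N\ge 0$ is part of the hypothesis of the structure theorem. The dual feasibility of $\bp^N,\bp^0$ in Boundary-LP$^*(\check U,\check P,T)$ is entirely symmetric, using the second halves of (\ref{eqn.boundary1}) and (\ref{eqn.boundary2}) together with $q^N\ge 0$, $\bq^0\ge 0$.

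Next I would check that Boundary-LP and Boundary-LP$^*$ are genuinely an LP/dual pair with matching objective values, so that feasibility of both plus equal objectives forces optimality. Writing Boundary-LP in standard form (two blocks of constraints, multipliers $\bp^N$ for the first block and $\bp^0$ for the second — this is the natural guess given the objective of Boundary-LP$^*$), one checks the dual is precisely Boundary-LP$^*$; this is a mechanical transposition. Then I would compute $(\gamma + cT - A\Tt\check P)\Tt\bu^0 + \gamma\Tt\bu^N$ and show it equals $(\beta + bT - A\check U)\Tt\bp^N + \beta\Tt\bp^0$. The cleanest route is to invoke the already-established strong duality and complementary slackness for the M-CLP pair: the optimal value is $\int_{0-}^T(\gamma+(T-t)c)\Tt dU = \int_{0-}^T(\beta+(T-t)b)\Tt dP$, and both sides can be expanded into (a) a boundary contribution at $0$ and $T$ and (b) an "interior" contribution $\int_0^T$ involving the rates; the interior contributions on the two sides are equal by the integral form of complementary slackness (\ref{eqn.compslack2}) integrated over $(0,T)$, so the boundary contributions must be equal as well, and these boundary contributions are exactly the two objective expressions above (after moving the $A\Tt\check P$ and $A\check U$ terms appropriately). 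Alternatively, one verifies complementary slackness between $(\bu^0,\bu^N)$ and $(\bp^N,\bp^0)$ termwise directly from the index-set relations in (\ref{eqn.compatible}) — e.g.\ $\bu^0_j = 0$ for $j\in\J_0$ while the corresponding dual slack vanishes for $j\notin\J_0$ — which immediately gives optimality without computing the common value.

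I expect the main obstacle to be the bookkeeping in the second step: correctly matching the constraint blocks of Boundary-LP with the variable blocks of Boundary-LP$^*$ (the "reversed time" convention, which pairs $\bp^N$ with the time-$0$ primal constraint and $\bp^0$ with the time-$T$ discontinuity constraint, is easy to get backwards), and then tracking the $A\Tt\check P$ versus $A\check U$ correction terms through the substitutions for $x^N$ and $q^0$ so that the two objective values coincide on the nose. The termwise complementary-slackness verification is the safer path and is where I would put the weight of the argument, since each of the $2K+2J$ pairings reduces to one line of (\ref{eqn.compatible}) together with the strict complementarity (\ref{eqn.compslack2}) applied to the rates; once complementary slackness and mutual feasibility are in hand, LP duality closes the argument with no need to evaluate the objective explicitly.
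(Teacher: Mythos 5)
Your proposal is correct and follows essentially the same route as the paper: feasibility of $\bu^0,\bu^N$ and $\bp^N,\bp^0$ with slacks $x^0,\bx^N$ and $q^N,\bq^0$, followed by termwise complementary slackness read off from (\ref{eqn.compatible}), which is exactly the argument given. (Two cosmetic points: your displayed expressions for $x^N$ and $q^0$ have sign/$A$-factor slips, and the strict complementarity (\ref{eqn.compslack2}) for the rates is not actually needed --- the four boundary pairings are all supplied by (\ref{eqn.compatible}) alone.)
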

\begin{proof}
Clearly, in the optimal solution $\bu^0,\bu^N,\bp^0,\bp^N$  are feasible solutions to (\ref{eqn.boundaryoptprim}), (\ref{eqn.boundaryoptdual}), with slacks $\bx^N,x^0$ for the primal and $\bq^0,q^N$ for the dual.  Furthermore, if $x^0_k > 0$ so that the corresponding constraint is not tight,  then by (\ref{eqn.compatible}) we have $\bp_k^0 = 0$, and vice versa, if $\bp_k^0 > 0$, then by (\ref{eqn.compatible}) $x_k^0 = 0$, and the corresponding constraint of the primal is tight.  The same argument works for the $\bx_k^N$, as well as  for the dual slacks.
Hence, $\bu^0,\bu^N,\bp^0,\bp^N$ are feasible complementary slack solutions, and so they are optimal.
\end{proof}

\begin{corollary}
\label{thm.boundaryLP1} 
Let the vectors $\underline{x}$ and $\underline{q}$ be defined by 
\[ \underline{x}_k = - \min_{1\le n\le N} \Big\{\sum_{m=1}^n\dx_k^m\tau_m ; 0 \Big\},  \quad
\underline{q}_j = - \min_{1\le n\le N} \Big\{\sum_{m=n}^N\dq_j^m\tau_m ; 0 \Big\}.
\]
Then the same boundary values $\bu^0,\bu^N$ and $\bp^0,\bp^N$ are also  optimal solutions of the following pair of (not dual)  LP problems:
\begin{equation}
\label{eqn.boundaryprob1}
\mbox{modBoundary-LP }\quad 
\begin{array}{ll}
\max & (\gamma +c T)\Tt \bu^0 + \gamma\Tt \bu^N   \\
\mbox{s.t.}& A \bu^0       \qquad \quad           \le \beta -\underline{x},  \\
                 & A \bu^0  + A \bu^N  \le \beta +  bT - A \check{U}, \\
& \bu^0,\,\bu^N \ge 0
\end{array}  \hspace{0.9in}
\end{equation}
\begin{equation}
\label{eqn.boundaryprob2}
\mbox{modBoundary-LP$^*$}\quad 
\begin{array}{ll}
\max & (\beta +b T)\Tt \bp^N + \beta\Tt \bp^0   \\
\mbox{s.t.}& A\Tt \bp^N       \qquad \quad           \ge \gamma + \underline{q},  \\
                 & A\Tt \bp^N  + A\Tt \bp^0  \ge \gamma +  c T - A\Tt \check{P}, \\
& \bp^N,\,\bp^0 \ge 0
\end{array}  \hspace{0.8in}
\end{equation}
\end{corollary}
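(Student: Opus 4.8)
The plan is to treat the two modified problems one at a time: first verify that the prescribed boundary values are feasible, and then deduce optimality by embedding each boundary problem inside the full M-CLP, respectively M-CLP$^*$, problem. For feasibility, the second constraint of modBoundary-LP coincides with the second constraint of Boundary-LP$(\check U,\check P,T)$ and holds because the state value $\bx^N=\beta+bT-A\check U-A\bu^0-A\bu^N$ is nonnegative; symmetrically $\bq^0\ge0$ gives the second constraint of modBoundary-LP$^*$. For the first constraint of modBoundary-LP we need $x^0_k\ge\underline x_k$ for every $k$, which is immediate from $x^n_k=x^0_k+\sum_{m=1}^n\dx^m_k\tau_m\ge0$ for all $n$ together with $x^0_k\ge0$, giving $x^0_k\ge-\min\bigl\{0,\ \min_{1\le n\le N}\sum_{m=1}^n\dx^m_k\tau_m\bigr\}=\underline x_k$. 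The bound $q^N_j\ge\underline q_j$ follows in the same way from $q^n_j\ge0$, $q^N_j\ge0$ and $q^{n-1}_j=q^N_j+\sum_{m=n}^N\dq^m_j\tau_m$ (equation (\ref{eqn.lineq1})).

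The crux is optimality. The key point is that the constraints of modBoundary-LP describe exactly the set of pairs $\hat\bu^0,\hat\bu^N\ge0$ such that, combined with the fixed rates $u(t),\dx(t)$ and fixed interval lengths $\tau_1,\dots,\tau_N$ of the optimal solution, they again produce a feasible M-CLP solution. Indeed, writing $\hat x^0=\beta-A\hat\bu^0$, the resulting state takes the values $\hat x^0+\sum_{m=1}^n\dx^m\tau_m$ at $t=t_n$ for $n=0,\dots,N$ (the value at $t_N$ being the left-limit $\hat x(T-)$) and is piecewise linear on $[0,T)$, so it is $\ge0$ on $[0,T)$ iff these $N+1$ values are $\ge0$, which, since $\underline x\ge0$, is exactly $A\hat\bu^0\le\beta-\underline x$; the remaining constraint $A\hat\bu^0+A\hat\bu^N\le\beta+bT-A\check U$ is precisely $\hat\bx^N=x(T)\ge0$. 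For any such solution the M-CLP objective equals $(\gamma+cT)\Tt\hat\bu^0+\gamma\Tt\hat\bu^N+\int_0^T(\gamma+(T-t)c)\Tt u(t)\,dt$, the last term being a constant independent of the boundary values. Therefore maximizing the modBoundary-LP objective over its feasible set coincides with maximizing the M-CLP objective over this restricted family; since the given optimal solution is the member of the family with $\hat\bu^0=\bu^0,\hat\bu^N=\bu^N$ and it is M-CLP-optimal among all feasible solutions, it is a fortiori optimal within the family, so $\bu^0,\bu^N$ solves modBoundary-LP. The proof that $\bp^0,\bp^N$ solves modBoundary-LP$^*$ is the mirror image in reverse time, with $\underline q$ replacing $\underline x$ and with the M-CLP$^*$ objective of the embedded dual solution equal to $(\beta+bT)\Tt\hat\bp^N+\beta\Tt\hat\bp^0+\int_0^T(\beta+(T-t)b)\Tt p(t)\,dt$.

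The main obstacle is the bookkeeping in the optimality step: one must check carefully that the two linear inequalities of each modified problem encode precisely the statement ``all breakpoint values of the state and its left-limit at $T$ are $\ge0$'', so that feasibility of the modified LP is equivalent to feasibility of the embedded M-CLP (resp. M-CLP$^*$) solution; and the reverse-time indexing of $q$ and $p$ has to be carried through with care in the M-CLP$^*$ case.
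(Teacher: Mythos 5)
Your proposal is correct and follows essentially the same route as the paper: fix the optimal rates $u^m$ and interval lengths $\tau_m$, observe that the M-CLP constraints then reduce exactly to the modBoundary-LP constraints (with $x^0,\dots,x^N\ge 0$ collapsing to $x^0\ge\underline{x}$) and that the M-CLP objective differs from the modBoundary-LP objective by a constant, so optimality of the boundary values in the restricted family follows from optimality of the full solution. The only cosmetic difference is that you phrase this as an embedding of feasible sets while the paper substitutes the optimal solution directly into M-CLP and reads off the reduced LP.
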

\begin{proof}
Let $U,x,P,q$ be an optimal solution of M-CLP/M-CLP$^*$, as in the structure Theorem \ref{thm.structure}.
Substitute the optimal solution in the objective and constraints of M-CLP, to obtain:
\begin{equation}
\begin{array}{lll}
\label{eqn.dCLP1}
&& \displaystyle \mbox{Optimal objective}  =
(\gamma + c T)\Tt  \bu^0 + \gamma\Tt \bu^N + 
\sum_{m=1}^N \left(\gamma+ \left(T - \frac{t_m + t_{m-1}}{2}\right) c \right)\Tt \tau_m u^m
    \nonumber   \\
&&  A \bu^0 + x^0 = \beta   \nonumber \\[0.1cm]
&&  A \bu^0 + x^0  =  \beta +  x^0 + b t_n - A \sum_{m=1}^n \tau_m u^m - x^n   \quad \text{for } n=1,\dots,N   \\[0.1cm]
&&  A \bu^0 + A \bu^N  + \bx^N  = \beta + b T  - A \sum_{m=1}^N  \tau_m u^m    
\nonumber \\[0.1cm]
&& \quad \bu^0,\, u^1,\dots,u^N, \bu^N, \,x^0, x^1,\dots,x^N, \bx^N \ge 0.  \nonumber
\end{array}
\end{equation}
Since $\dx^n,u^n$ satisfy the constraints of the rates LP we have
\[
x^0 + b t_n - A \sum_{m=1}^n \tau_m u^m - x^n= \sum_{m=1}^n \tau_m (b - A u^m - \dx^m)  = 0.
\]
Also 
\[
x^0,x^1\ldots,x^n \ge 0   \mbox{ is equivalent to }  x^0 \ge \underline{x}.
\]
Keeping all the other optimal values, we see that $\bu^0,\, \bu^N, x^0, \, \bx^N$ are optimal solutions of
\begin{eqnarray}
\label{eqn.dCLP1m3}
 &\max &z =\displaystyle (\gamma + c T)\Tt  \bu^0 + \gamma\Tt \bu^N    \nonumber   \\
  &\mbox{s.t.} &  \displaystyle A \bu^0 + x^0 = \beta  \\
 &&  \displaystyle A \bu^0 + A \bu^N + \bx^N  = \beta + b T - A \check{U} \nonumber \\ 
 && x^0 \ge \underline{x}  \nonumber  \\
&& \quad \bu^0,\, \bu^N, \, \bx^N \ge 0.  \nonumber
\end{eqnarray}
which is equivalent to modBondary-LP (\ref{eqn.boundaryprob1}).   The proof for the dual boundary values is the same.
\end{proof}

\subsection{Uniqueness}
\label{sec.uniqueness}
We now show that optimal solutions of the form described in the structure theorem are unique optimal solutions.  We need the following uniqueness condition:
\begin{definition}
\label{asm.uniqueness}{\bf Uniqueness Condition}
We say that $T$ satisfies the uniqueness condition if:
\[
\text{(1)  The vector }  \left[ \begin{array}{c}\beta \\ \beta + b T \end{array} \right] \text{ is in general position to the matrix }  \left[ \begin{array}{cccc} A & 0 & I & 0 \\ A & A & 0 & I \end{array} \right]
\]
\[
\text{(2)  The vector }   \left[ \begin{array}{c}\gamma \\ \gamma + c T \end{array} \right] \text{ is in general position to the matrix }  \left[ \begin{array}{cccc} A\Tt & 0 & I & 0 \\ A\Tt & A\Tt & 0 & I \end{array} \right]
\]
\end{definition}

{\sc Comment}  We believe that under  the non-degeneracy assumptions \ref{asm.nondegI}, \ref{asm.nondegII} the uniqueness condition is satisfied at all but a finite number of values of $T$.  We do not have a proof for this.

\begin{theorem}
\label{thm.uniqueness}
Let $U(t), x(t)$ be an optimal solution as described in the Structure Theorem \ref{thm.structure}. Assume the non-degeneracy assumption \ref{asm.nondegI}, and assume that $\beta, \gamma, T$ satisfy the Uniqueness Condition \ref{asm.uniqueness}.  Then $U(t), x(t)$ are the unique optimal solution to M-CLP.
\end{theorem}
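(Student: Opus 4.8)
The plan is to reduce the assertion to the uniqueness of the optimal solution of a single finite linear program, and then to read that uniqueness off from Part~(2) of the Uniqueness Condition~\ref{asm.uniqueness} by means of the standard fact that non-degeneracy of the dual of an LP forces uniqueness of the primal optimum.

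First I would record how much of an optimal solution is pinned down in advance. Let $U'(t),x'(t)$ be an arbitrary optimal solution of M-CLP; the goal is to show $U'=U$ and $x'=x$. By Theorem~\ref{thm.solform}(ii)--(iii) together with Non-Degeneracy Assumption~\ref{asm.nondegI}, every optimal solution has the form of the Structure Theorem~\ref{thm.structure}: impulse controls only at $0$ and $T$, piecewise constant rates, and continuous piecewise linear states on $(0,T)$; moreover $U'(t)-U'(0)$ is one and the same function on $(0,T)$ for all optimal solutions. Consequently the rate function $u(t)$ (off the breakpoints), the slopes $\dx(t)$, the number of intervals $N$, the breakpoints $0<t_1<\dots<t_{N-1}<T$ and the lengths $\tau_1,\dots,\tau_N$, and hence also the vectors $\check U=\int_0^T u(t)\,dt$ and $\underline{x}$ of Corollary~\ref{thm.boundaryLP1}, are the same for every optimal solution. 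Since $x'(t)=\beta+bt-U'(t)$, since $U'(t)=U'(0)+\int_0^t u(s)\,ds$ on $[0,T)$, and since $U'(T)=U'(T-)+\bu^N$, the whole pair $U'(t),x'(t)$ is determined once the two boundary vectors $\bu^0=U'(0)$ and $\bu^N=U'(T)-U'(T-)$ are known. So it suffices to show that $\bu^0$ and $\bu^N$ are uniquely determined.

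Next I would invoke Corollary~\ref{thm.boundaryLP1}: $\bu^0,\bu^N$ is an optimal solution of modBoundary-LP~(\ref{eqn.boundaryprob1}), a standard linear program whose data ($\check U$, $\underline{x}$ and $T$) is common to all optimal M-CLP solutions; it is feasible (it contains $\bu^0,\bu^N$) and has finite optimal value (namely the M-CLP optimal value minus the fixed quantity $\sum_{m=1}^{N}\bigl(\gamma+(T-\frac{t_m+t_{m-1}}{2})c\bigr)\Tt\tau_m u^m$ that appears in the proof of Corollary~\ref{thm.boundaryLP1}). Hence the claim reduces to showing that modBoundary-LP has a unique optimal solution, and I would get this from the non-degeneracy of its LP dual. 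Introducing multipliers $y^0,y^N\ge 0$ for the two blocks of constraints of~(\ref{eqn.boundaryprob1}), the dual problem is $\min\ (\beta-\underline{x})\Tt y^0+(\beta+bT-A\check U)\Tt y^N$ subject to $A\Tt y^0+A\Tt y^N\ge\gamma+cT$, $A\Tt y^N\ge\gamma$, $y^0,y^N\ge 0$; crucially, the quantities $\check U$ and $\underline{x}$ appear only in the dual objective, never in the dual constraints. Putting this dual into standard form with surplus variables, and then permuting its rows and columns and rescaling columns by $-1$ where needed, its constraint matrix takes the shape $\left[\begin{smallmatrix}A\Tt & 0 & I & 0\\ A\Tt & A\Tt & 0 & I\end{smallmatrix}\right]$ and its right-hand side the corresponding rearrangement of $\left[\begin{smallmatrix}\gamma\\ \gamma+cT\end{smallmatrix}\right]$. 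Because general position is invariant under invertible row operations performed simultaneously on the matrix and the vector, under column permutations, and under column rescalings, Part~(2) of the Uniqueness Condition~\ref{asm.uniqueness} asserts precisely that this right-hand side is in general position to this matrix; therefore every basic feasible solution of the dual of modBoundary-LP is non-degenerate. Since modBoundary-LP is feasible and bounded, its dual attains its optimum at a basic feasible solution, which is then non-degenerate, and by the standard LP fact that dual non-degeneracy at an optimal basis implies a unique primal optimum, modBoundary-LP has a unique optimal solution. Thus $\bu^0$ and $\bu^N$, and with them $U'(t)$ and $x'(t)$, coincide with those of the given solution, proving uniqueness.

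The hard part will be the bookkeeping of the last step: writing down the dual of modBoundary-LP correctly, confirming that the perturbing terms $\check U,\underline{x}$ sit in the dual objective and hence are irrelevant to degeneracy, and checking that the row and column manipulations that bring the dual's standard-form data into the exact form of Part~(2) of~\ref{asm.uniqueness} genuinely preserve general position. One should also be careful about orientation: it is Part~(2) (the condition on $\gamma$ and $\gamma+cT$), rather than Part~(1), that controls uniqueness of the primal boundary vectors $\bu^0,\bu^N$, with Part~(1) playing the symmetric role for the dual states. The remaining steps are routine consequences of Theorem~\ref{thm.solform}, Corollary~\ref{thm.boundaryLP1}, and standard linear programming duality.
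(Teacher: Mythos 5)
Your proposal is correct and follows essentially the same route as the paper: reduce to uniqueness of the boundary impulses via Theorem \ref{thm.solform}(iii), identify $\bu^0,\bu^N$ as optimal solutions of modBoundary-LP (\ref{eqn.boundaryprob1}) with data $\check{U},\underline{x}$ common to all optimal solutions, and deduce uniqueness of that LP's optimum from non-degeneracy of its dual, which is exactly what Part (2) of the Uniqueness Condition \ref{asm.uniqueness} guarantees. Your write-up is in fact more explicit than the paper's about why $\check{U}$ and $\underline{x}$ only perturb the dual objective and hence cannot spoil dual non-degeneracy.
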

\begin{proof}
By Theorem \ref{thm.solform} $U(t) - U(0)$ is unique on interval $(0, T)$. Hence, all we need to show is that the boundary values $\bu^0, \bu^N, x^0, \bx^N$ are unique. Assume otherwise. Let $\tU(t)$ be an optimal solution with another set of impulses $\tbu^0,\tbu^N$. By Corollary \ref{thm.boundaryLP1}, $\tbu^0,\tbu^N$ are optimal solutions of (\ref{eqn.boundaryprob1}), with exactly the same constants $\check{U}$ and $\underline{x}$.  
Consider then the dual of (\ref{eqn.boundaryprob1}), which has r.h.s $[\gamma, \gamma+ cT]\Tt$.   
 Under the Uniqueness Assumption \ref{asm.uniqueness}  all feasible basic solutions of this dual  are non-degenerate.  This implies that the optimal solution of (\ref{eqn.boundaryprob1}) is unique, and we have shown that  $\tbu^0=\bu^0,\,\tbu^N=\bu^N$.  As a result also $x^0,\,\bx^N$ are also unique.  
\end{proof}


\subsection{Validity regions}
\label{sec.validity}\leavevmode
\begin{definition}
Let $\K_n,\,\J_n,\,n=0,\ldots,N+1$ be a base sequence. Let $\mathcal{T} $  be the set of all $\beta, \gamma, T$ for which this base sequence is optimal. Then  $\mathcal{T}$ is called the validity region of $\K_n,\,\J_n,\,n=0,\ldots,N+1$.
\end{definition}
\begin{theorem}
\label{thm.conves}
The validity region $\mathcal{T}$ of a base-sequence $\K_n,\,\J_n,\,n=0,\ldots,N+1$ is a convex polyhedral cone.
\end{theorem}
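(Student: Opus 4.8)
The plan is to show that $\mathcal{T}$ is described by a finite system of linear inequalities and homogeneous scaling relations in the parameters $\beta,\gamma,T$, which exhibits it directly as a convex polyhedral cone. The starting point is that, for a fixed base sequence $\K_n,\J_n$, $n=0,\ldots,N+1$, the rates $u^n,\dx^n,p^n,\dq^n$ are determined by the bases $B_1,\ldots,B_N$ through Rates-LP/Rates-LP$^*$ (\ref{eqn.prates})--(\ref{eqn.drates}), and crucially these do \emph{not} depend on $\beta,\gamma,T$ at all. Thus only the interval lengths $\tau_1,\ldots,\tau_N$ and the boundary values vary with the parameters. First I would observe that the time interval equations (\ref{eqn.lineq1}), the normalization (\ref{eqn.sumtau}), and the boundary equations (\ref{eqn.boundary1})--(\ref{eqn.boundary2}) — after substituting $x^N=x^0+\sum_n\dx^n\tau_n$ and $q^0=q^N+\sum_n\dq^n\tau_n$ — form a linear system $M\cdot(\text{boundary values},\tau)^{\mathsf T} = R$, where $M$ has entries built only from $A$, $I$, and the (parameter-independent) rates, while the right-hand side $R$ is a fixed linear function of $(\beta,\gamma,T)$. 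Hence the solution map $(\beta,\gamma,T)\mapsto(\text{boundary values},\tau)$ is linear (on the region where $M$ has the appropriate rank; under Assumption \ref{asm.nondegI} and the structure theorem this is exactly the relevant regime).

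Next I would pin down which conditions must be imposed for this base sequence to actually \emph{be} optimal. By the Structure Theorem \ref{thm.structure}(i), optimality is equivalent to: all $\tau_n\ge 0$; all boundary values $\bu^0,\bu^N,\bp^0,\bp^N,x^0,\bx^N,q^N,\bq^0\ge 0$; and all breakpoint values $x^n,q^n\ge 0$ for $n=0,\ldots,N$ together with the limit values $x^N=x(T-)\ge0$, $q^0=q(T-)\ge0$. Each of these quantities is, by the previous paragraph, a fixed linear functional of $(\beta,\gamma,T)$ (the $x^n$ are $x^0+\sum_{m\le n}\dx^m\tau_m$, etc.), so each constraint is a homogeneous linear inequality $\ell_i(\beta,\gamma,T)\ge 0$. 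I should also note that admissibility and adjacency of the $B_n$, and compatibility of $\K_0,\J_{N+1}$ with $B_1,B_N$, are purely combinatorial properties of the base sequence and impose no further restriction on the parameters. Therefore
\[
\mathcal{T}=\{(\beta,\gamma,T): \ell_i(\beta,\gamma,T)\ge 0,\ i=1,\ldots,m\},
\]
a finite intersection of homogeneous half-spaces, hence a convex polyhedral cone. Homogeneity (the cone property) is visible because every defining relation is homogeneous of degree one: scaling $(\beta,\gamma,T)$ by $\lambda>0$ scales $R$ by $\lambda$, hence scales all $\tau_n$ and boundary/breakpoint values by $\lambda$, preserving all sign conditions.

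The main obstacle is handling the linear-algebra step cleanly: one must argue that the system $M\,\xi=R$ has a genuine (single-valued, linear) solution map on all of $\mathcal{T}$, rather than merely on a dense subset. If $M$ is square and invertible this is immediate; if not, one has to argue — using Assumption \ref{asm.nondegI} and perhaps Theorem \ref{thm.solform}(iii) (uniqueness of $U_O(t)-U_O(0)$ on $(0,T)$) — that whenever the base sequence is optimal the solution is unique, so $M$ restricted to $\mathcal{T}$ has full column rank and the relevant solution is given by a fixed choice of left-inverse, yielding a single linear map. A secondary subtlety is that the set $\mathcal{T}$ as defined includes only parameters for which an \emph{optimal} solution of the prescribed form exists; one should check that if $\ell_i(\beta,\gamma,T)\ge0$ for all $i$ then the constructed $u,x,p,q$ indeed satisfy the hypotheses of Theorem \ref{thm.structure}(i) and are thus optimal, closing the loop and confirming that the inequality description is not merely necessary but also sufficient. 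Once that equivalence is in hand, the polyhedral-cone conclusion is immediate.
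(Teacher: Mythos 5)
Your overall strategy is close to the paper's, but there is a genuine gap at exactly the point you flag as ``the main obstacle,'' and the fix you propose does not work. You want to invert the square system (time interval equations, $\sum\tau_n=T$, and the two sets of boundary equations) to get a single-valued linear map $(\beta,\gamma,T)\mapsto(\text{boundary values},\tau)$, and then describe $\mathcal{T}$ as $\{\ell_i(\beta,\gamma,T)\ge 0\}$ for fixed linear functionals $\ell_i$. But the coefficient matrix of that system depends only on the base sequence (via the rates and the index sets), and for some base sequences it is singular, so the solution is genuinely non-unique on the whole validity region of that sequence. Theorem \ref{thm.solform}(iii) does not rescue you: it gives uniqueness only of $U_O(t)-U_O(0)$ on the open interval $(0,T)$, and says nothing about the impulses $\bu^0,\bu^N,\bp^0,\bp^N$. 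The paper's own examples show the boundary values can fail to be unique (the one-dimensional Case 3 at $T=-\gamma/c$, and the $2\times 2$ example at $T=1$, where a free parameter $\theta\in[0,1]$ survives); uniqueness of the boundary values requires the extra Uniqueness Condition \ref{asm.uniqueness}, which is not assumed in Theorem \ref{thm.conves}. So ``a fixed choice of left-inverse'' is not available, and your inequality description with single-valued $\ell_i$ is not a valid characterization of $\mathcal{T}$.

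The paper sidesteps this entirely by working with an existential rather than a functional characterization. It assembles one matrix $M$, built only from $A$, the rates, and the index sets, so that the full solution vector $v$ (boundary values, $\tau$, and all breakpoint state values) satisfies $Mv\Tt=[\,\beta\;\gamma\;0\;T\;0\;\cdots\;0\,]\Tt$, and observes that $\mathcal{T}$ is precisely the set of $(\beta,\gamma,T)$ for which some $v\ge 0$ solves this system, i.e.\ the image of the nonnegative orthant under $M$ intersected with the coordinate hyperplanes forcing the other components to zero (and with $T\ge0$). The image of a polyhedral cone under a linear map is again a polyhedral cone (Weyl--Minkowski / Fourier--Motzkin), and no rank or uniqueness assumption is needed; scaling and additivity of solutions give the cone property directly. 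If you replace your inversion step by this projection argument --- keeping your (correct) observations that the rates are parameter-independent and that all defining relations are homogeneous and linear --- your proof closes. You should also note, as you do in your last paragraph, that nonnegativity of $v$ plus the equations is sufficient for optimality by Theorem \ref{thm.structure}(i), which is what makes the existential description exactly equal to $\mathcal{T}$ and not merely a superset.
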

\begin{proof}
Let $\K_n,\,\J_n,\,n=0,\ldots,N+1$ be an optimal base sequence for at least one $(\beta, \gamma, T)$ and let
\[
v = \left[ \begin{array}{ccccccccccccccc}
\bu^0 & x^0 & q^N & \bp^N & \tau & x^1 & \dots & x^N & q^{N-1} & \dots & q^0 & \bu^N & \bx^N & \bp^0 & \bq^0 
\end{array}\right]
\]
be the corresponding optimal solution. 

For a given a column index set $\N$ we use the following notations: \\
 $A_\N$ is the matrix composed of the corresponding columns of the matrix $A$. \\
 $I_{[\N]}$ is the matrix obtained from the unit matrix $I$ by replacing the diagonal elements with indexes  in $\N$ by the value 0. 

We also define following  matrices:\\
$S$ is the $(N-1) \times N$ matrix of the $\dx, \dq$ coefficients of the time interval equations (\ref{eqn.lineq1}).\\
$E^1$ is the  $(N-1) \times K$ matrix composed of: $E^1_{n,k} = 1$ if for the time interval equations of $t_n$: $v_n = \dx_k$ and $x^0_k \in \K_0$, otherwise $E^1_{n,k} = 0$. \\ 
$E^2$ is the $(N-1) \times J$ matrix composed of:  $E^2_{n,j} = 1$ if for the time interval equations of $t_n$: $v_n = u_j$ and $q^N_j \in \J_{N+1}$, otherwise $E^2_{n,j} = 0$. \\
$E^3$ is the $(NK) \times K$ matrix where the left $((N-1)K) \times K$ block is a zero matrix and the right $K \times K$ block is a unit matrix. \\
$E^4$ is the $(NJ) \times J$ matrix where the left $((N-1)J) \times J$ block is a zero matrix and the right $J \times J$ block is a unit matrix. 
\[
X = \left[ \begin{array}{cccc}
\dx^1 & 0 &\dots & 0 \\
\vdots & \ddots &  & \vdots \\
\vdots & & \ddots & 0 \\
\dx^1 & \dx^2 &\dots & \dx^N 
\end{array}\right] \qquad
Q = \left[ \begin{array}{cccc}
0 & \dots & 0 & \dq^N \\
\vdots & & \iddots & \vdots \\
0 & \iddots &  & \vdots \\
\dq^1 & \dots & \dq^{N-1} & \dq^N 
\end{array}\right]
\]
Finally, we define the matrix $M$ \\
\scalebox{0.93}{
$
\left[
\begin{array}{ccccccccccc}
A_{\J_0} & I_{[\K_0]} & 0 & 0 &  0 & 0 & 0 & 0 & 0 & 0& 0  \\
 0 &  0 & - I_{[\J_{N+1}]} & A\Tt_{\K_{N+1}} & 0 & 0 & 0 & 0 & 0 & 0 & 0\\
  0 &  E^1 & E^2 & 0 & S  & 0 & 0 & 0 & 0 & 0 & 0\\
  0 &  0 & 0 & 0 & 1\dots 1 & 0 & 0 & 0 & 0  & 0 & 0\\
  0 &  I_{[\K_0]} & 0 & 0 & \multirow{3}{*}{$\Bigg[X\Bigg]$} &  \multirow{3}{*}{$\Bigg[- I\Bigg]$} & 0 & 0 & 0 & 0 & 0\\
  \vdots &  \vdots & \vdots & \vdots &  &   & \vdots & \vdots & \vdots & \vdots & \vdots\\
  0 &  I_{[\K_0]} & 0 & 0 &  &  & 0 & 0 & 0 & 0 & 0\\
  0 &  0 & I_{[\J_{N+1}]} & 0 & \multirow{3}{*}{$\Bigg[Q\Bigg]$} &  0 & \multirow{3}{*}{$\Bigg[- I\Bigg]$} & 0 & 0 & 0 & 0 \\
  \vdots &  \vdots & \vdots & \vdots &  & \vdots  &  & \vdots & \vdots & \vdots & \vdots\\
   0 &  0 & I_{[\J_{N+1}]} & 0 &  &  0 &  & 0 & 0 & 0 & 0 \\
  0 &  0 & 0 & 0 & 0  & -E^3 & 0 & A_{\J_{N+1}} & I_{[\K_{N+1}]} & 0 & 0 \\
  0 &  0 & 0 & 0 & 0 & 0 & E^4 & 0 & 0 & A\Tt_{\K_0} & - I_{[\J_0]} 
\end{array} \right]
$
}

Given  $A, b, c$, the  base sequence $\K_n,\,\J_n,\,n=0,\ldots,N+1$ determines all the coefficients of the matrix $M$.  It  follows from the Structure Theorem \ref{thm.structure} that:
\begin{equation}
\label{eqn.coupled}
\left[ \begin{array}{cccccccc}
\beta & \gamma & 0 & T & 0 & 0 & 0 & 0
\end{array}\right]\Tt = Mv\Tt.
\end{equation}

Any combination of $\beta,\gamma,T\ge 0$ and $\nu\ge 0$ that solves (\ref{eqn.coupled}) presents an optimal solution $\nu$ with the base sequence $\K_n,\,\J_n,\,n=0,\ldots,N+1$, and with $\beta,\gamma,T$ in the validity region of the base sequence.  

One can immediately see that for any $\theta>0$,  $\K_n,\,\J_n,\,n=0,\ldots,N+1$ is the optimal base sequence for the boundary values $(\theta\beta,\theta \gamma, \theta T)$, with the optimal solution $\theta \nu$.
Similarly, if $(\beta_*, \gamma_*, T_*)$ is in the validity region, with solution $\nu_*$, then 
$(\beta, \gamma, T)+(\beta_*, \gamma_*, T_*)$ is in the validity region, with the solution $\nu+\nu_*$.
Hence $\mathcal{T} $ is a convex cone.

Furthermore, the image under the linear transformation presented by $M$ of the convex non-negative orthant,  intersected with the planes $0$ at all the coordinates except $\beta, \gamma,T$ and intersected with $T\ge 0$, is the validity region of the base sequence.  This is obviously a convex polyhedral cone.
\end{proof}

The following corollary  follows immediate from Theorem \ref{thm.conves}.
\begin{corollary}
Let  $\ell(\theta)=(\beta(\theta), \gamma(\theta), T(\theta)) = (\beta, \gamma, T) + \theta(\delta\beta, \delta\gamma, \delta T)$ be a straight line of boundary parameters. As $\theta$ changes, within the validity region of a single base-sequence, each of the interval lengths $\tau_i$, each of boundary values $\bu^0_j, \bu^N_j, \bx^0_k, \bx^N_k, \bp^N_k, \bp^0_k,$ $\bq^N_j, \bq^0_j$ and each of $x_k(t_n), q_j(T(\theta)-t_n)$ are affine functions of $\theta$. 
\end{corollary}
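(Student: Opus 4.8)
The plan is to read off the claim directly from the linear system $(\ref{eqn.coupled})$, $Mv\Tt = [\beta,\gamma,0,T,0,0,0,0]\Tt$, established in the proof of Theorem~\ref{thm.conves}. Since the base sequence $\K_n,\J_n,\,n=0,\ldots,N+1$ is fixed along $\ell(\theta)$ (we stay inside its validity region, which is a cone, hence in particular a convex set, by Theorem~\ref{thm.conves}), the matrix $M$ is constant: it depends only on $A,b,c$ and on the combinatorial data $\K_n,\J_n$, not on $\beta,\gamma,T$. Thus $v=v(\theta)$ solves a linear system whose coefficient matrix is fixed and whose right-hand side is the affine function of $\theta$ obtained by substituting $\beta(\theta)=\beta+\theta\,\delta\beta$, $\gamma(\theta)=\gamma+\theta\,\delta\gamma$, $T(\theta)=T+\theta\,\delta T$.

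First I would note that by the uniqueness part of the argument in Section~\ref{sec.uniqueness} — more precisely, by the discussion following the Structure Theorem together with the fact that $U(t)-U(0)$ is unique on $(0,T)$ (Theorem~\ref{thm.solform}(iii)) and the boundary values are then pinned down (Theorem~\ref{thm.uniqueness}) — the solution component $v(\theta)$ is determined by the equations, at least generically. The cleanest way to phrase it without invoking the Uniqueness Condition is: the structure-theorem construction produces $v(\theta)$ as \emph{a} solution of $(\ref{eqn.coupled})$, and what we must show is that this constructed $v(\theta)$ is affine in $\theta$. Because the interval lengths $\tau$, the boundary values, and the breakpoint states are exactly the entries of $v$ (this is how $v$ is defined in the proof of Theorem~\ref{thm.conves}), and because $x_k(t_n)=\sum_{m=1}^n \dx_k^m\tau_m$ (resp. $q_j(T-t_n)=\sum_{m=n}^N \dq_j^m\tau_m$) is a fixed linear function of the $\tau_m$ — the $\dx^m,\dq^m$ being constants determined by the bases — affineness of $\tau(\theta)$ already gives affineness of $x_k(t_n),q_j(T(\theta)-t_n)$, which takes care of the last assertion of the corollary.

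The remaining point is that $v(\theta)$, the solution selected by the construction, depends affinely on $\theta$. I would argue this as follows: restrict $M$ to the columns indexed by the support of $v$ guaranteed by the base sequence; over the (relatively open) part of the validity cone this square-ish subsystem is invertible (this is exactly the non-degeneracy/uniqueness input, and it is what makes the constructed solution the unique one with that support), so $v(\theta)=M_{\mathrm{sub}}^{-1}\,r(\theta)$ with $r(\theta)$ affine in $\theta$ and $M_{\mathrm{sub}}^{-1}$ independent of $\theta$; hence $v(\theta)$ is affine. On the lower-dimensional faces one can either appeal to continuity of the (now unique, by the structure theorem once the base sequence is fixed) solution, or simply observe that the whole segment $\ell(\theta)\cap\mathcal T$ lies in the closure of a single relatively open cell.

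The main obstacle I anticipate is bookkeeping rather than conceptual: one must be careful that the \emph{support} of $v$ — which indices among $\bu^0,x^0,\ldots$ are structurally zero — is genuinely constant over the validity region and not just the base sequence labels $\K_n,\J_n$; but this is immediate since $(\ref{eqn.compatible})$ shows the zero pattern of the boundary variables is a function of $\K_0,\J_0,\K_{N+1},\J_{N+1}$ alone, and the zero pattern of the rates and of $\dx^m,\dq^m$ is a function of the bases $B_1,\ldots,B_N$ alone. Once that is in hand the corollary is just the statement that solving a fixed linear system against an affine right-hand side yields an affine solution, so I would keep the write-up to a few lines.
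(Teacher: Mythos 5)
Your proposal is correct and follows essentially the same route as the paper: the paper's two-line proof simply points to the linear system $Mv\Tt=[\beta,\gamma,0,T,0,0,0,0]\Tt$ from the proof of Theorem \ref{thm.conves} (for the boundary values) and to the analogous fixed-coefficient linear-system argument in Weiss's Theorem 5 (for the $\tau_i$ and breakpoint states), which is exactly the ``fixed matrix, affine right-hand side'' reduction you spell out. Your write-up is merely more explicit about the square subsystem, its invertibility under non-degeneracy, and the handling of degenerate faces, which the paper leaves implicit.
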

\begin{proof}
The proof for $\tau_i, x_k(t_n), q_j(T(\theta)-t_n)$ is the same as in the proof of  Theorem 5 in \cite{weiss:08}.  The proof for the boundary values follows from the proof of  convexity  in  Theorem \ref{thm.conves}.  
\end{proof}

\section{Strong duality without Slater-type condition}\leavevmode
\label{sec.strong}
\begin{theorem}
\label{thm.strong}
If M-CLP and M-CLP$^*$ are feasible, then both have optimal solutions, and there is no duality gap.
\end{theorem}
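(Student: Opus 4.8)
The plan is to remove the Slater-type condition from Theorem~\ref{thm.strongduality} by a perturbation-and-limit argument, using the already-established Theorems~\ref{thm.strongduality}, \ref{thm.solform}, and the Structure Theorem~\ref{thm.structure}(i). First I would observe that since both M-CLP and M-CLP$^*$ are feasible, I can produce, for every small $\epsilon>0$, a perturbed pair of problems that \emph{does} satisfy the Slater-type condition~\ref{asm.slater}: perturb the right-hand side data by taking $\beta^\epsilon = \beta + \epsilon \mathbf{1}$, $\gamma^\epsilon = \gamma + \epsilon \mathbf{1}$ (or, symmetrically, relax the constraints of both primal and dual simultaneously). A feasible solution of the original primal together with the extra slack $\epsilon$ gives a strictly feasible point for Test-LP~(\ref{eqn.ptestLP}) with the perturbed data, and likewise for the dual; hence by definition the perturbed pair satisfies~\ref{asm.slater}, so by Theorem~\ref{thm.strongduality} each perturbed pair has optimal solutions $U^\epsilon, P^\epsilon$ with no duality gap, and by Theorem~\ref{thm.solform} these may be taken piecewise linear with boundedly many breakpoints (the bound $N\le\binom{K+J}{K}$ from Section~\ref{sec.structurethm} is uniform in $\epsilon$, since it depends only on $A$).

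Next I would pass to the limit $\epsilon\to 0$. The uniform bound $N\le\binom{K+J}{K}$ on the number of intervals lets me extract a subsequence along which the combinatorial data stabilize: the sequence of admissible adjacent bases $B_1,\dots,B_N$ and the compatible boundary index sets $\K_0,\J_0,\K_{N+1},\J_{N+1}$ take only finitely many possible values, so some base sequence occurs for a subsequence $\epsilon_k\to 0$. For that fixed base sequence the interval lengths $\tau^{\epsilon_k}$ and all the boundary values solve the linear system (\ref{eqn.lineq1})--(\ref{eqn.boundary2}) with right-hand side depending affinely on $(\beta^{\epsilon_k},\gamma^{\epsilon_k},T)$; I would argue these remain bounded (a primal feasible $U^{\epsilon_k}$ has total variation controlled by the fixed data since $A U^{\epsilon_k}(t)\le \beta^{\epsilon_k}+bt$ and $U^{\epsilon_k}\ge 0$, and symmetrically for $P^{\epsilon_k}$; on the compact horizon $[0,T]$ this bounds the $\tau$'s and the impulse sizes) and extract a further convergent subsequence. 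The limit $\tau, \bu^0, \bu^N,\dots$ then solves (\ref{eqn.lineq1})--(\ref{eqn.boundary2}) for the \emph{original} data $(\beta,\gamma,T)$, and all the nonnegativity requirements ($\tau\ge 0$, all boundary values $\ge 0$, all $x^n,q^n\ge 0$, $x(T-)\ge 0$, $q(T-)\ge 0$) pass to the limit since each is a closed condition. Thus the limiting data satisfy exactly the hypotheses of the Structure Theorem~\ref{thm.structure}(i), which yields an optimal primal/dual pair for the original problems with no duality gap.

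One loose end is that Theorem~\ref{thm.structure}(i) is stated under the Non-Degeneracy Assumption~\ref{asm.nondegI}; to cover the general case I would add a second, outer perturbation of $b$ and $c$ (replacing $b,c$ by generic nearby $b',c'$ so that \ref{asm.nondegI} holds) and repeat the same limiting argument, now letting the $(b',c')$ perturbation go to zero as well --- the bound on $N$ is again uniform, and feasibility of the original problems is preserved under small enough perturbations of $b,c$ because the feasibility criterion (Theorem~\ref{the.feas}) is an open-plus-closed condition that one checks survives. The main obstacle I expect is precisely the compactness/boundedness step: arguing that the optimal perturbed solutions $U^{\epsilon}, P^{\epsilon}$ (and their breakpoints and impulses) stay in a bounded set as $\epsilon\to 0$, so that a genuine limit exists rather than mass or breakpoints escaping. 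The a priori bounds from the primal constraint $A U(t)\le\beta+bt$ and dual constraint $A\Tt P(t)\ge\gamma+ct$ together with nonnegativity should give this, but care is needed in directions where $A$ is not full rank; there one uses that the rates in each interval are basic solutions of Rates-LP/Rates-LP$^*$, whose feasible basic solutions form a finite set determined by $A,b,c$ alone, so the rates $u^n,\dx^n,p^n,\dq^n$ are automatically bounded, and only the $\tau$'s and boundary impulses need the feasibility bounds.
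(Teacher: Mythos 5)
Your overall strategy is the paper's: perturb the data so that the Slater-type condition (and, via a second perturbation of $b,c$, Non-Degeneracy Assumption \ref{asm.nondegI}) holds, invoke Theorem \ref{thm.strongduality} and the Structure Theorem to get optimal base sequences for the perturbed problems, use the finiteness of the set of possible base sequences to fix one that occurs for perturbations arbitrarily close to zero, and then argue that this base sequence is still optimal at zero. Two remarks on the set-up: the paper perturbs $\gamma$ \emph{downward} ($\gamma^*(\theta)=\gamma-\alpha\theta 1_J$), since the dual constraint is $A\Tt P(t)\ge\gamma+ct$ and you need to \emph{relax} it to get strict dual feasibility; your displayed formula $\gamma^\epsilon=\gamma+\epsilon\mathbf{1}$ tightens it (your parenthetical ``relax both'' is the correct intent, so fix the sign). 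Also, the paper couples the $(b,c)$-perturbation to the same parameter $\theta$ rather than running a separate outer limit, with side conditions ($\alpha 1_K+\epsilon T>0$, $\alpha 1_J-\delta T>0$) ensuring the Slater condition survives.

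Where you genuinely diverge is the limiting step, and that is where your argument has a gap. You extract a subsequence with a constant base sequence and then need the interval lengths and boundary impulses to converge; for that you invoke a priori boundedness from $AU(t)\le\beta^\epsilon+bt$, $U\ge 0$. But as you yourself note, if the nonnegative orthant meets the kernel of $A$ nontrivially (e.g.\ a nonnegative combination of columns of $A$ vanishes), feasibility alone does not bound the impulses $\bu^0,\bu^N,\bp^0,\bp^N$, and your proposal does not close this. The paper avoids compactness entirely: by Theorem \ref{thm.conves} the validity region of a base sequence is a convex \emph{polyhedral} cone in $(\beta,\gamma,T)$ --- in particular a closed set --- so along the perturbation line the finitely many optimal base sequences partition $(0,1]$ into finitely many closed intervals, and the validity interval of the last base sequence necessarily contains $\theta=0$; the solution at $\theta=0$ is then produced directly by the linear system $Mv\Tt=[\beta,\gamma,0,T,0,\dots]\Tt$ with $v\ge 0$, no limit of solutions required (equivalently, the corollary to Theorem \ref{thm.conves} makes all the $\tau$'s and boundary values affine in $\theta$ on that interval, which is exactly the convergence you were trying to establish by hand). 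To repair your version you should replace the boundedness argument by an appeal to Theorem \ref{thm.conves}, or at least to the closedness of the image of the nonnegative orthant under the linear map $M$.
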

\begin{proof}
We denote by M-CLP$(\beta, \gamma, b, c, T)$ the M-CLP problem with parameters $\beta, \gamma, b,$ $ c, T$.
Assume M-CLP/M-CLP$^*(\beta, \gamma, b, c, T)$ are feasible. 
Assume the Slater type condition \ref{asm.slater} does not hold, otherwise there is nothing to prove. 
We will now construct an optimal solution for this problem.  We choose some $\alpha > 0$. 
For $0<\theta\le 1$, let $\beta^* (\theta) = \beta + \alpha \theta 1_K$, $\gamma^*(\theta) = \gamma - \alpha \theta 1_J$, where $1_K$, $1_J$ are vectors of 1's of appropriate dimension. If Non-Degeneracy Assumption \ref{asm.nondegI} does not hold we choose vectors $\epsilon, \delta$ such that for all $0 < \theta \le 1$ $b^*(\theta) = b + \epsilon \theta$ and $c^*(\theta) = c + \delta \theta$ are in general position to the matrices $[A\; I]$ and $[A\Tt \;I]$ respectively, and also $\alpha 1_K + \epsilon T > 0$, $\alpha 1_J - \delta T > 0$, otherwise $\epsilon, \delta = 0$. It is clear that M-CLP/M-CLP$^*(\beta^*(1), \gamma^*(1), b^*(1), c^*(1), T)$ are feasible and satisfy the Slater type condition,  and therefore have strongly dual optimal solutions by Theorem \ref{thm.strongduality}.  Furthermore, Non-Degeneracy Assumption \ref{asm.nondegI} holds for  $b^*(1), c^*(1)$, and hence by part (ii) of the proof of the Structure Theorem \ref{thm.structure} the solution can be represented by an optimal base sequence $\J_n, \K_n, n = 0, \dots, N +1$.  Consider now a parametric family of problems M-CLP/M-CLP$^*(\beta^*(\theta), \gamma^*(\theta), b^*(\theta), c^*(\theta), T)$  $1 \ge \theta \ge 0$. For every $\theta > 0$ M-CLP/M-CLP$^*(\beta^*(\theta), \gamma^*(\theta), b^*(\theta), c^*(\theta), T)$ will still be strictly feasible and will satisfy Non-De\-generacy Assumption \ref{asm.nondegI}, and hence have optimal solutions represented by an optimal base sequence.  Such a base sequence will be valid for some interval $[\theta_m,\theta_{m+1}]$ by Theorem \ref{thm.conves}. By part (i) of Theorem \ref{thm.solform} all bases in an optimal base sequence are distinct, and so the total number of optimal base sequences is finite. Therefore we have a finite partition $\theta_0 \ge 1 > \theta_1 > \dots > \theta_{M-1} > 0 \ge \theta_M$,  where each interval belongs to a validity region of some optimal base sequence.  The base sequence of the last interval is an optimal base sequence for   M-CLP$(\beta, \gamma, b, c, T)$, with an optimal solution.
\end{proof}

\paragraph{Completion of proof of Theorem \ref{thm.structure}(ii)} This now follows immediately from Theorem \ref{thm.strong} and its proof. \quad  $ \square$

\begin{corollary}
One of the following statements about M-CLP and M-CLP$^*$ is true: \\
(i) both are feasible and have optimal solution without duality gap, or \\
(ii) both are infeasible, or \\
(iii) one of the problems is infeasible and the other is unbounded. 
\end{corollary}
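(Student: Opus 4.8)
The plan is to carry out the standard linear-programming trichotomy, organized by which of the two programs are feasible. If both M-CLP and M-CLP$^*$ are feasible, then Theorem~\ref{thm.strong} already delivers statement~(i). If both are infeasible, we are in case~(ii). Since M-CLP$^*$ is itself an instance of the M-CLP form (with data $-\gamma,-\beta,-c,-b$ and coefficient matrix $-A\Tt$), the two programs play symmetric roles, so the only remaining situation up to interchanging them is: M-CLP feasible and M-CLP$^*$ infeasible. For statement~(iii) I must then show that in this case M-CLP is unbounded.

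For that case I would reduce to the finite linear program of Theorem~\ref{the.feas}. That theorem gives: M-CLP feasible $\iff$ Test-LP~(\ref{eqn.ptestLP}) feasible; applying it to M-CLP$^*$ gives M-CLP$^*$ feasible $\iff$ the analogous Test-LP$^*$ feasible, and a direct check shows that Test-LP$^*$ is exactly the linear-programming dual of Test-LP. Hence in the case at hand Test-LP is feasible while its LP dual is infeasible, so by the classical LP alternatives Test-LP is unbounded; thus it has a recession direction $(\bu_r,U_r)\ge 0$ satisfying $A\bu_r\le 0$, $A\bu_r+AU_r\le 0$, and $(\gamma+cT)\Tt\bu_r+\gamma\Tt U_r>0$.

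It remains to graft this recession direction onto M-CLP. Take any M-CLP-feasible $\bar U(\cdot)$ and, for $s\ge 0$, add an impulse $s\bu_r$ at $t=0$ and $sU_r$ at $t=T$: set $U_s(t)=\bar U(t)+s\bu_r$ on $[0,T)$ and $U_s(T)=\bar U(T)+s\bu_r+sU_r$. Then $U_s$ is nonnegative, nondecreasing and right-continuous, and $AU_s(t)\le\beta+bt$ holds on $[0,T)$ because $A\bu_r\le 0$ and at $t=T$ because $A\bu_r+AU_r\le 0$; so $U_s$ is feasible. Its objective is the objective of $\bar U$ plus $s\big[(\gamma+cT)\Tt\bu_r+\gamma\Tt U_r\big]$, since the impulse at $0$ is weighted by $\gamma+cT$ and the one at $T$ by $\gamma$; letting $s\to\infty$ this tends to $+\infty$, so M-CLP is unbounded. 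The mirror-image argument covers M-CLP infeasible and M-CLP$^*$ feasible, giving an unbounded M-CLP$^*$, and the three cases exhaust all possibilities.

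I expect the crux to be the second step: recognizing that the feasibility tests of M-CLP and M-CLP$^*$ form an LP-dual pair, so that infeasibility of one continuous program is certified by an unboundedness direction of the other's test program. Once this is in hand, the verification that Test-LP$^*$ is the dual of Test-LP, and the check that the perturbed control $U_s$ is feasible with diverging objective, are routine.
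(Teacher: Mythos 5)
Your proposal is correct and follows the same overall route as the paper: reduce to the finite Test-LP of Theorem~\ref{the.feas}, observe that the feasibility test for M-CLP$^*$ is the LP dual of Test-LP, and invoke the classical LP trichotomy to get the three cases. The one place you genuinely diverge is the crux of case (iii). The paper disposes of it in one sentence by appealing to the discretization arguments of the earlier paper (``the same will be true for any discretization, and it follows that M-CLP is unbounded''), whereas you make this step self-contained: you extract a recession direction $(\bu_r,U_r)\ge 0$ of the unbounded Test-LP and graft it onto an arbitrary feasible $\bar U$ as impulses $s\bu_r$ at $t=0$ and $sU_r$ at $t=T$, checking directly that $A\bu_r\le 0$ and $A\bu_r+AU_r\le 0$ preserve feasibility of the M-CLP constraints on $[0,T)$ and at $T$, and that the objective grows by $s\bigl[(\gamma+cT)\Tt\bu_r+\gamma\Tt U_r\bigr]\to\infty$. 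This is a more elementary and verifiable argument than the paper's citation, at the cost of a few lines; the paper's version buys brevity by leaning on machinery already developed for the discretizations. Both are valid, and your identification of Test-LP$^*$ as the exact LP dual of Test-LP (which the paper uses implicitly) checks out against the dual of (\ref{eqn.ptestLP}).
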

\begin{proof}
Consider the Test-LP problem (\ref{eqn.ptestLP}) and its dual Test-LP$^*$. For Test-LP/Test-LP$^*$ we have the following three cases: \\
(i) Test-LP and Test-LP$^*$ are feasible. Then by Theorem \ref{the.feas} M-CLP and M-CLP$^*$ are also feasible and hence by Theorem \ref{thm.strong} have optimal solutions without duality gap. \\
(ii) Test-LP and Test-LP$^*$ are both infeasible. Then by Theorem \ref{the.feas} M-CLP and M-CLP$^*$ are also both infeasible. \\
(iii) One of Test-LP/Test-LP$^*$ problems is infeasible, while the other is unbounded, say Test-LP$^*$ is infeasible while Test-LP is unbounded.   Following the arguments of \cite{shindin-weiss:13} the same will be true for any discretization (see problem (10) in \cite{shindin-weiss:13}), and it follows that M-CLP is unbounded.
\end{proof}

\section{Relations between M-CLP and SCLP}
\label{sec.sclp}

We now discuss the relation between SCLP (separated continuous linear programs) and M-CLP. 
In \cite{weiss:08}  SCLP is formulated as: 
\begin{eqnarray}
\label{eqn.PWSCLP}
&\max & \int_0^T (\gamma\Tt + (T-t)c\Tt) u(t) + d\Tt x(t) \,dt   \nonumber   \hspace{1.0in} \\
\mbox{SCLP}  & \mbox{s.t.} &  \int_0^t G\, u(s)\,ds  + F x(t) \le \alpha + a t \\
 && \quad\; H u(t) \le b \nonumber \\
&& \quad x(t), u(t)\ge 0, \quad 0\le t \le T.  \nonumber
\end{eqnarray}
with a symmetric dual  
\begin{eqnarray}
\label{eqn.DWSCLP}
&\min & \int_0^T (\alpha\Tt + (T-t)a\Tt) p(t) + b\Tt q(t) \,dt     \nonumber  \hspace{0.9in} \\
\mbox{SCLP$^*$}  &\mbox{s.t.} &  \int_0^t  G\Tt\, p(s)\,ds + H\Tt q(t) \ge \gamma + c t \\
 && \quad\; F\Tt p(t) \ge d \nonumber \\
&& \quad q(t), p(t)\ge 0, \quad 0\le t \le T.   \nonumber
\end{eqnarray}
and  constant vectors and matrices $G,F,H,\alpha,a,b,\gamma,c,d$.

These problems do not in general satisfy strong duality.  Examples are given in Section \ref{sec.SCLP}.

%

\begin{definition}[Definition 2.1 in \cite{shindin-weiss:13}]
Consider the SCLP problem (\ref{eqn.PWSCLP}). Then the M-CLP problem with the following data:
\begin{equation*}
\begin{array}{c}
A = \left[ \begin{array}{cccc} G & 0 & F & -F \\   0 & 0 & -I & I \\  
H & I & 0 & 0  \\  -H & -I & 0 & 0 \end{array}  \right] \quad
U(t) = \left[  \begin{array}{c} U_*(t) \\  U_s(t) \\ U^{+}(t) \\ U^{-}(t)  \end{array}  \right] \quad
\beta^* + b^*t =  \left[  \begin{array}{c} \alpha \\  0 \\ 0 \\ 0  \end{array}  \right] +
 \left[ \begin{array}{c} a \\  0 \\ b \\ -b  \end{array} \right] t, \\
\gamma^* + (T-t)c^* = \left[ \gamma \quad 0 \quad\;  d  \quad -d \right] + (T-t)\left[ c \quad 0 \quad\;  0  \quad 0 \right]
\end{array} 
\end{equation*}
is called the M-CLP extension of SCLP. \/ M-CLP$^*$ extension of SCLP$^*$ is defined similarly.
\end{definition}

The following theorem extends Theorem 2.2 in \cite{shindin-weiss:13}.
\begin{theorem}
\label{thm.generalization}
M-CLP/M-CLP$^*$ are generalizations of  SCLP/SCLP$^*$ (\ref{eqn.PWSCLP}),(\ref{eqn.DWSCLP}) in the  following sense: \\
(i) if SCLP is feasible then M-CLP extension is feasible, equivalently, if M-CLP extension is infeasible then SCLP is infeasible \\
(ii) if SCLP is feasible and M-CLP$^*$ extension is feasible, then the supremum of the objective of SCLP is equal to the objective value of the optimal solution of the M-CLP extension \\
(iii) if SCLP is feasible and M-CLP$^*$ extension is infeasible, then SCLP is unbounded \\
(iv)  SCLP has an optimal solution if and only if  M-CLP has an absolutely continuous optimal solution, with possible jumps in $U^+(0), U^-(0)$. 
\end{theorem}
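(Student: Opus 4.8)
The approach is to combine the solution dictionary between SCLP and its M-CLP extension from \cite{shindin-weiss:13} with the new unconditional strong duality, Theorem \ref{thm.strong}, which removes the only hypothesis that Theorem~2.2 of \cite{shindin-weiss:13} needed; parts (iii) and (iv) are then obtained from the feasibility/boundedness trichotomy and from the absolutely continuous structure of optimal solutions. Concretely, a bounded-variation SCLP-feasible pair $(u(t),x(t))$ lifts to $U=(U_*,U_s,U^+,U^-)$ by $U_*(t)=\int_0^t u(s)\,ds$, $U_s(t)=bt-HU_*(t)$ (nondecreasing since $Hu\le b$), and $U^+,U^-$ the cumulative positive and negative variation of $x$ together with $U^+(0)=x(0)$, so that $U^+-U^-=x$; this $U$ is M-CLP-extension-feasible, its M-CLP slacks are the SCLP slacks, and the objectives match, which is the content of Theorem~2.2 of \cite{shindin-weiss:13}. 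The mirror construction does the same for SCLP$^*$ and the M-CLP$^*$ extension. Part (i) is then immediate --- alternatively it follows from feasibility of the Test-LP (\ref{eqn.ptestLP}) together with Theorem \ref{the.feas}.

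For (ii), assume SCLP and the M-CLP$^*$ extension are feasible. By (i) the M-CLP extension is feasible, so Theorem \ref{thm.strong} guarantees that both it and its dual attain optimal solutions with a common finite value $v$; this is exactly the hypothesis under which Theorem~2.2 of \cite{shindin-weiss:13} identifies $v$ with the supremum of the SCLP objective, so $\sup(\mbox{SCLP})=v$. The inequality $\sup(\mbox{SCLP})\le v$ is the lifting just described; the reverse inequality comes from approximating the impulses of the optimal M-CLP-extension solution in its $U_*,U_s$ coordinates by fast ramps, producing absolutely continuous M-CLP-extension solutions that project to SCLP-feasible solutions of objective tending to $v$.

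For (iii), suppose SCLP is feasible while the M-CLP$^*$ extension is infeasible. The mirror dictionary forces SCLP$^*$ to be infeasible, part (i) keeps the M-CLP extension feasible, and the trichotomy corollary following Theorem \ref{thm.strong} then makes the M-CLP extension unbounded; the value correspondence of \cite{shindin-weiss:13} (with extended-real values; equivalently a discretization argument as around problem~(10) there) shows $\sup(\mbox{SCLP})=+\infty$. For (iv): if SCLP has an optimal solution it is in particular bounded, so by (iii) the M-CLP$^*$ extension is feasible and (ii) applies; taking the canonical optimal SCLP solution with continuous piecewise-linear $x$ and piecewise-constant $u$, its lift is absolutely continuous on $(0,T]$ with impulses confined to $U^+(0),U^-(0)$ (coming from $x(0)$), is feasible, and has objective $v$, hence is an optimal M-CLP-extension solution of the asserted form. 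Conversely, an absolutely continuous optimal M-CLP-extension solution with impulses only in $U^+(0),U^-(0)$ has a well-defined projection $u=\dot U_*$, $x=U^+-U^-$ to an SCLP-feasible solution; since $x$ has no jump at $T$ its SCLP objective equals the M-CLP-extension objective $=v=\sup(\mbox{SCLP})$ by (ii), so SCLP attains its optimum.

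The main obstacle I anticipate is the ``$\ge$'' analysis needed in (ii) and (iv): showing that the M-CLP-extension optimum (or, when it is unbounded, an unbounded ray) can be approached by, respectively projected onto, SCLP solutions without loss of objective. This requires pinning down precisely which impulses an optimal M-CLP-extension solution of the extension of an SCLP may carry --- in particular that impulses at $T$ in the $U^+,U^-$ coordinates do not affect $\int_0^T d\Tt x(t)\,dt$, and that impulses in the $U_*,U_s$ coordinates can be regularized by ramps --- together with the bounded-variation regularity making the representation $x=U^+-U^-$ legitimate. The forward inclusions, part (i), and the ``$\le$'' bookkeeping are routine once the dictionary of \cite{shindin-weiss:13} is in hand.
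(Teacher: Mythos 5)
Your parts (i), (ii) and the sufficiency half of (iv) track the paper's argument: (i) is a lift of a feasible SCLP solution (the paper actually uses the linear interpolation $\tU_*(t)=\frac{t}{T}\int_0^Tu(s)\,ds$, $\tU_f(t)=x(0)+\frac{t}{T}(x(T)-x(0))$ rather than your cumulative lift, which sidesteps any bounded-variation hypothesis on $x$ since only the endpoint values enter), and (ii) is, as you say, Theorem~2.2(iii) of \cite{shindin-weiss:13} with the Slater hypothesis now supplied by Theorem~\ref{thm.strong}.

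The genuine gap is in (iii). You pass from ``M-CLP$^*$ extension infeasible, M-CLP extension feasible'' to ``M-CLP extension unbounded'' via the trichotomy corollary, and then assert that the value correspondence yields $\sup V(SCLP)=+\infty$. But the correspondence you have actually established only goes one way: SCLP-feasible solutions lift to M-CLP-extension-feasible ones, giving $V(\mbox{M-CLP})\ge\sup V(SCLP)$, which is the wrong inequality here. To conclude that SCLP is unbounded you must project a family of M-CLP-extension solutions with unbounded objective back into SCLP, and such solutions may carry impulses in the $U_*,U_s$ coordinates; your proposed ``fast ramps'' cannot in general absorb these, because the SCLP constraint $Hu(t)\le b$ bounds the control rates pointwise, so an impulse in $U_*$ need not be approximable by feasible ramps at all. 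This is precisely the step the paper handles differently: it argues by contradiction with a three-interval discretization dSCLP (\ref{eqn.dSCLP}) whose feasible solutions are finite rates $u^1,u^2,u^3$ and hence automatically extend to impulse-free $\tU_*$ and project to SCLP-feasible solutions with the explicit objective offset $T d\Tt x^*(0)$; it then shows the dual constraints (\ref{eqn.constrd2}) are infeasible for some small $\theta>0$ by LP continuity from the infeasible discretization of the M-CLP$^*$ extension, so dSCLP, and therefore SCLP, is unbounded. A secondary gap is in the necessity half of (iv): you take ``the canonical optimal SCLP solution with continuous piecewise-linear $x$ and piecewise-constant $u$,'' but mere existence of an optimal SCLP solution does not provide a piecewise-linear one; the paper instead invokes Theorem~7.8 of \cite{anderson-nash:87} to replace $x^*$ by an absolutely continuous $x^{**}$ and decomposes that into nondecreasing absolutely continuous $\tU^+-\tU^-$ with a possible jump only at $0$.
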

\begin{proof}
(i)  Let $u(t), x(t)$ be a feasible solution of SCLP. We define the following:
\[\tU_*(t) = \frac{t}{T} \int_0^T u(t)dt, \quad \tU_s(t) = \left(b - H \frac{1}{T} \int_0^T u(t)dt\right)t, \quad \tU_f(t)= x(0) + \frac{t}{T}(x(T) - x(0)) \]
One can see that $\tU_*(t), \tU_s(t)$ is non-decreasing functions, while $\tU_f(t)$ is monotonic in each coordinate,  and hence could be represented as $\tU_f(t) = \tU^+(t) - \tU^-(t)$, where $\tU^+(t),  \tU^-(t)$ are non-decreasing functions. Then the resulting $\tU(t) = [\tU_*(t)\; \tU_s(t)\; \tU^+(t)\; \tU^-(t)]$ satisfies the constraints of the M-CLP extension.

(ii) The proof is the same as the proof of Theorem 2.2(iii) in \cite{shindin-weiss:13}, 
except that  existence of optimal solution for M-CLP/M-CLP$^*$ and its modifications is guaranteed without the Slater-type condition.

(iii) Assume to the contrary, that the objective of SCLP has an upper bound $\Psi$. 
Let  $u^*(t), x^*(t)$ be feasible solution of SCLP. We consider now the following discretization of the M-CLP extension of SCLP, which we call dSCLP:
\begin{equation}
\label{eqn.dSCLP}
\!\!\!\begin{array}{ll}
\max &  \left(\gamma^* + c^*(T - \theta)\right)\Tt 2\theta u^1 + \left(\gamma^* + c^* \left(\frac{T}{2}-2\theta\right)\right)\Tt (T -4\theta) u^2 +(\gamma^* + c^* \theta)\Tt 2\theta u^3 \\
\mbox{s.t.} & 2 \theta A  u^1 \le \beta^* + 2\theta b^* - [F\Tt\; 0]\Tt x^*(0) \\
 & 2 \theta A  u^1 + (T - 4\theta) A u^2 \le \beta^* + b^* (T-2\theta) - [F\Tt\; 0]\Tt x^*(0) \\
 & 2 \theta A  u^1 + (T - 4\theta) A u^2 + 2 \theta A  u^3 \le \beta^* + b^* T - [F\Tt\; 0]\Tt x^*(0) \\
 & u^1, u^2, u^3 \ge 0
\end{array} 
\end{equation}
where $\theta$ is a small number defined below. 

Taking a feasible solution of the SCLP, extending it to a feasible solution of the M-CLP extension as described in proof of (i) and setting:
\begin{eqnarray*}
&& \tu^1 = \frac{\tU(2\theta) - \tU(0)}{2\theta} \quad \tu^2 = \frac{\tU(T-2\theta) -\tU(2\theta)}{T - 4\theta} \quad \tu^3 = \frac{\tU(T) - \tU(T-2\theta)}{2\theta}
\end{eqnarray*} 
one can obtain a feasible solution of dSCLP (\ref{eqn.dSCLP}). Conversely, any feasible solution of dSCLP can be extended to a feasible solution of the M-CLP extension: 
\begin{equation}
 \tU(t) = \left\lbrace \begin{array}{ll} \displaystyle [I\; 0]\Tt x^*(0) + u^1 t  & t \le 2\theta \\
\displaystyle [I\; 0]\Tt x^*(0) + 2\theta (u^1 - u^2) + u^2 t  & 2\theta < t \le T - 2\theta  \\
\displaystyle [I\; 0]\Tt x^*(0) + 2\theta (u^1 - u^3) + (T-4\theta)(u^2 -u^3)  + u^3 t &  t > T - 2\theta  
 \end{array}  \right.  
\end{equation}
This solution can be further extended to a feasible solution of SCLP, by setting $\tu(t) = \frac{d\tU_*(t)}{dt}$ and $\tx(t) = \tU^+(t) -\tU^-(t)$.
Moreover, the corresponding  objective values of these solutions satisfy: $V(dSCLP) + T d\Tt x^*(0) = V(M-CLP) = V(SCLP)$.

We now write the constraints of the dual of dSCLP:
\begin{equation}
\label{eqn.constrd2}
\begin{array}{l}
A\Tt p^1 \ge \gamma^* + c^*\theta \\
A\Tt p^1 + A\Tt p^2 \ge \gamma^* + c^*(\frac{T}{2} - 2 \theta) \\
A\Tt p^1 + A\Tt p^2 + A\Tt p^3 \ge \gamma^* + c^*(T -\theta) \\
p^1, p^2, p^3 \ge 0
\end{array} 
\end{equation}
One can see that at $\theta=0$ constraints (\ref{eqn.constrd2}) are identical to the constraints of the discretization of M-CLP$^*$ and hence infeasible. From LP theory  it follows that one can find $\theta_1 > 0$ small enough, such that (\ref{eqn.constrd2}) still be infeasible for $\theta =\theta_1$. This determines our choice of $\theta$. 

Finally, because dSCLP is feasible and (\ref{eqn.constrd2}) is not, it follows that dSCLP is unbounded.  Thus  $V(dSCLP) > \Psi - T d\Tt x^*(0)$ and $V(SCLP) > \Psi$. This contradicts our assumption and hence SCLP is unbounded.

(iv) {\em Sufficiency} is already shown in Theorem 2.2(iii) in \cite{shindin-weiss:13}. \\
{\em Necessity:} Let $u^*(t), x^*(t)$ be an optimal solution of SCLP. Then, we can find an absolutely continuous function $x^{**}(t)$, such that $u^*(t), x^{**}(t)$ is also optimal solution of SCLP (see Theorem 7.8 in \cite{anderson-nash:87}). We now write $x^{**}(t)$ as the difference of two non-decreasing absolutely continuous functions $x^{**}(t) = \tU^+(t) - \tU^-(t)$. Let $u_s(t)$ be the slacks of the constraints $H u(t)\le b$, and let $\tU_*(t) = \int_0^t u^*(t) dt, U_s(t) = \int_0^t u_s(t)dt$. Then the resulting $\tU(t) = [\tU_*(t), \tU_s(t), \tU^+(t), \tU^-(t)]$ satisfies the constraints of the M-CLP extension, with the same objective value and hence it is optimal for the M-CLP extension by (ii) and (iii). This solution is indeed absolutely continuous, except possible for a jump at $\tU^+(0), \tU^-(0)$
\end{proof}

%
%

\section{Illustrative examples}
\label{sec.examples}
Here we give some examples that motivate our research and describe important properties of M-CLP solutions.  In Section \ref{sec.onedim} we describe all possible solutions for  one dimensional M-CLP problems, with $K=J=1$.   
In Section \ref{sec.twothreedim} we present an example of an M-CLP problems with $K,J=2$, which we  solve for all time horizons $0<T<\infty$, to illustrate how the solution evolves over  changing time horizons.  Finally, in Section \ref{sec.SCLP} we present examples of SCLP problems, which do not satisfy strong duality with SCLP$^*$. 

\subsection{One-dimensional cases}
\label{sec.onedim}
Let $J\!=\!K\!=\!1$, so that  M-CLP/M-CLP$^*$ are parametrized by the five values of $a,\,\beta,\,b,\,\gamma,\,c$.  The nature of the solution depends only on the signs of these parameters, so there are $32$ cases.  Because the dual for $a,\,\beta,\,b,\,\gamma,\,c$ is in fact the primal problem with $-a,-\gamma,-c,-\beta,-b$, we get all the different cases by considering the primal and the dual solutions for $a>0$ and the 16 sign combinations of $\beta,\,b,\,\gamma,\,c$.  Without loss of generality we rescale the problem and set $a=1$.  We will consider all 16 cases, and describe the solutions as a function of $T$.  

Even with these very small one dimensional examples we will demonstrate many of the features of the solutions, as described in the structure theorem:
\begin{itemize}
\item
Four of the cases have feasible primal and dual solutions for all $0\le T < \infty$.
\item
Four of the cases have feasible primal and dual solutions for $0\le  T \le \overline{T}$ for some finite $\overline{T}$ but are primal infeasible with unbounded duals for $T > \overline{T}$.
\item
Eight of the cases are primal infeasible for all $T$, with unbounded duals.
\item
Impulse controls are sometimes used, both at times $0$ and $T$.
\item
The boundary values may be constant or they may vary linearly with $T$.
\item
The state variables may be discontinuous at time $T$, in which case they have a jump down to $0$.
\item
When the Uniqueness Condition \ref{asm.uniqueness} does not hold the solution may be non-unique.
\item
Solutions for small $T$ are single interval solutions for all cases.  
For larger $T$ they may require two intervals.  
\item 
For one dimensional problem it is not possible that both primal and dual are infeasible.
\end{itemize}

We now describe the solution for the various cases.   
We give a brief description of the solutions in the eight feasible cases, and display the solutions in 
Figures  \ref{fig.case1}-- \ref{fig.case8}. 
In each figure we plot the right hand side of the primal, the objective coefficients of the primal, the primal solution, and the dual solution which is running in reversed time.  In the plots of the solutions we draw the cumulative controls $U(t),\,P(t)$ where we put an up-arrow to denote impulses, and the states (the slacks) $x(t),\,q(t)$.

\subsubsection*{Feasible for all $T$}  When $\beta>0,\,b>0$ the primal and dual are feasible and bounded for all $0\le T<\infty$.
\subsubsection*{Case 1:  $\beta>0,\,b>0,\,\gamma>0,\,c>0$}

Here the optimal primal solution is to use $U(t)=\beta+bt$, with an impulse at $0$, and the optimal dual solution is to use $P(t)=\gamma+ct$ with an impulse at $0$.  The slacks are $x(t)=q(t)=0$.

\begin{figure}[h!] 
 \centering

 \includegraphics[width=5in]{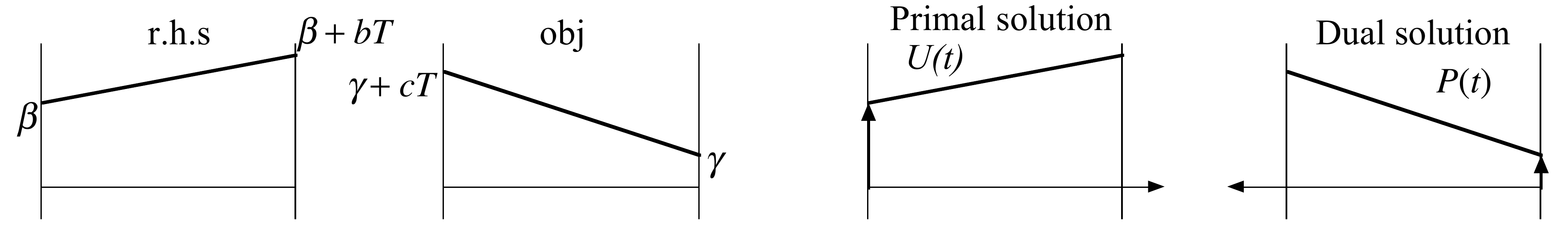} 
  \caption{$K=J=1$, Case 1:  $\beta  > 0,b > 0,\gamma  > 0,c > 0$}

  \label{fig.case1}
\end{figure}

\subsubsection*{Case 2:  $\beta>0,\,b>0,\,\gamma>0,\,c<0$}
Here the optimal primal solution is to use a single impulse at time $T$, so $U(t)=0,\,t<T$ and $U(T)=\beta+bT$.  The slack $x(t)$ is discontinuous at $T$:  $x(t)=\beta+bt,\,0\le t<T$, $x(T)=0$, with a jump 
$x(T^-)-x(T)=\beta+bT$.   The optimal dual solution is to have $P(t)=\gamma$ with a single impulse at $0$, and $q(t)=- c t$.
\begin{figure}[h!] 
 \centering
 \includegraphics[width=5in]{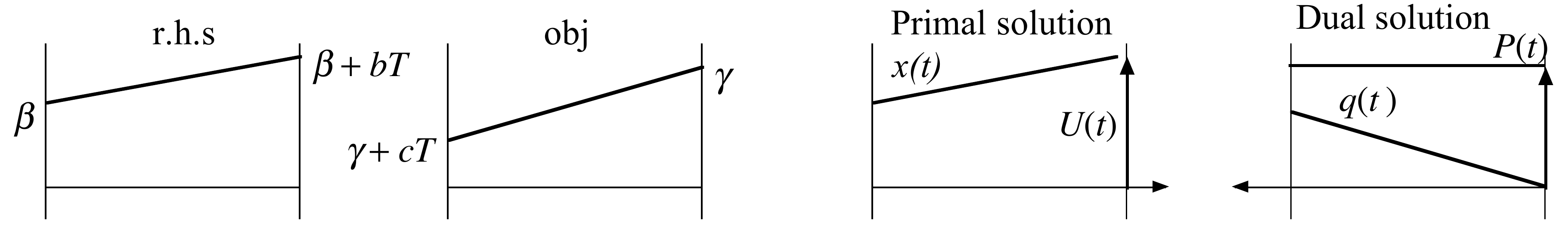} 
  \caption{$K=J=1$, Case 2:  $\beta  > 0,b > 0,\gamma  > 0,c < 0$}
  \label{fig.case2}
\end{figure}

\subsubsection*{Case 3:  $\beta>0,\,b>0,\,\gamma<0,\,c>0$}

The optimal solution for this case consists of a single interval when  $T<- \frac{\gamma}{c}$ and of two intervals when $T>- \frac{\gamma}{c}$.  It is non-unique when $T = - \frac{\gamma}{c}$.  For time horizon $T<- \frac{\gamma}{c}$ the solution is to have $U(t)=P(t)=0$  with the slacks $x(t)=\beta+bt$, $q(t)=-\gamma - ct$, for $0\le t \le T$.
The solution for $T>- \frac{\gamma}{c}$  has $U(t)=\beta+bt,\,0\le t \le T + \frac{\gamma}{c}$, with an impulse at $0$, and $U(t)=U(T + \frac{\gamma}{c}),\,t>T + \frac{\gamma}{c}$, with the slack  $x(t)= 0,\,t< T + \frac{\gamma}{c}$, and $x(t)= b(t- T - \frac{\gamma}{c}),\,t>T + \frac{\gamma}{c}$.  The dual solution (in reversed time) has $P(t)=0, t\le - \frac{\gamma}{c}$ and $P(t)=c(t + \frac{\gamma}{c}),\,t>- \frac{\gamma}{c}$, with the slack $q(t)=-\gamma - ct,\,t< - \frac{\gamma}{c}$, and $q(t)=0,\,t\ge - \frac{\gamma}{c}$.

At the time horizon $T=- \frac{\gamma}{c}$ the solution to the primal problem is not unique:  We can have $U$ consisting of a single impulse at time $0$, of size $\bu^0$ that can take any value from $0$ to $\beta$, so $U(t)=\bu^0$, and the corresponding slack is $x(t)=\beta+bt - \bu^0$, for $0\le t\le T$.
Note that at $T=- \frac{\gamma}{c}$ the vector $\left[ \begin{array}{c}\gamma \\ \gamma + c T \end{array} \right]  = \left[ \begin{array}{c}\gamma \\ 0 \end{array} \right]$
 is not in general position to the matrix   $\left[ \begin{array}{cccc} A\Tt & 0 & I & 0 \\ A\Tt & A\Tt & 0 & I \end{array} \right]$, so the Uniqueness Condition \ref{asm.uniqueness} is indeed violated.
\begin{figure}[H] 
 \centering
 \includegraphics[width=5in]{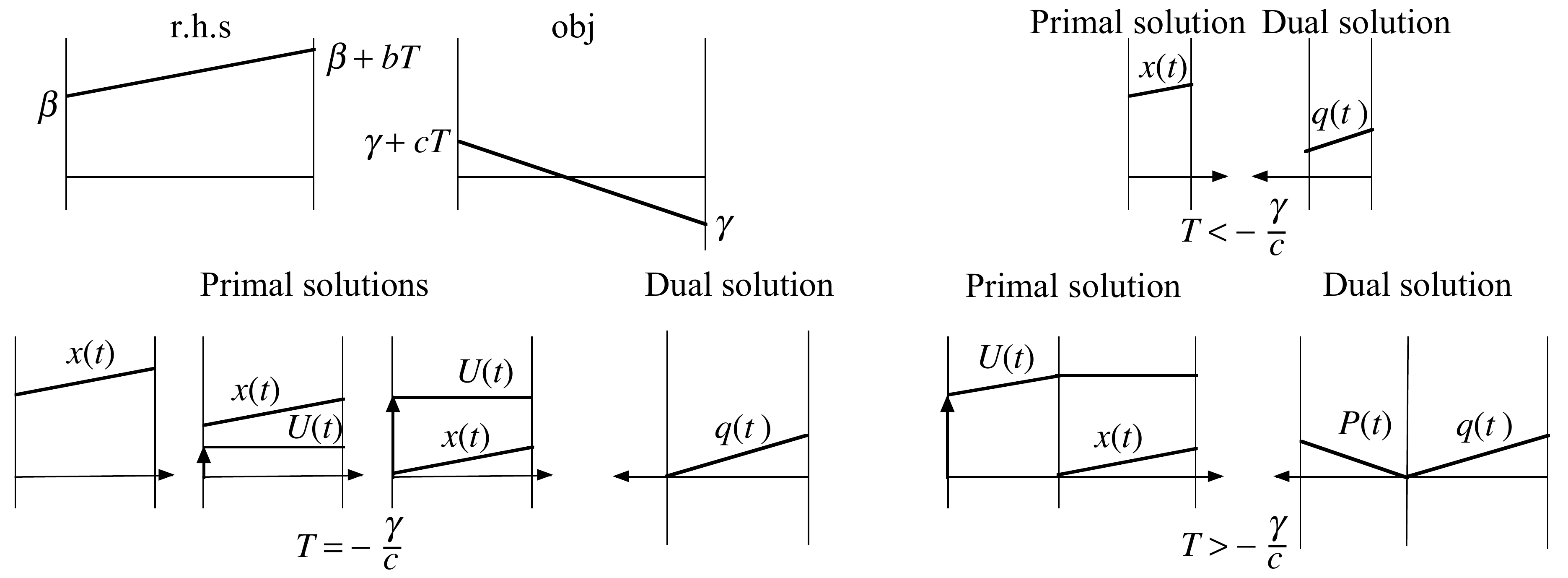} 
  \caption{$K=J=1$, Case 3:  $\beta  > 0,b > 0,\gamma  < 0,c > 0$}

  \label{fig.case3}
\end{figure}

\subsubsection*{Case 4:  $\beta>0,\,b>0,\,\gamma<0,\,c<0$}
Here the solution is the same as for Case 3 when $T<- \frac{\gamma}{c}$:  the primal and dual controls are $U(t)=P(t)=0$, with slacks $x(t)=\beta+bt$, $q(t)=-\gamma-ct$.
\begin{figure}[ht!] 
 \centering
 \includegraphics[width=5in]{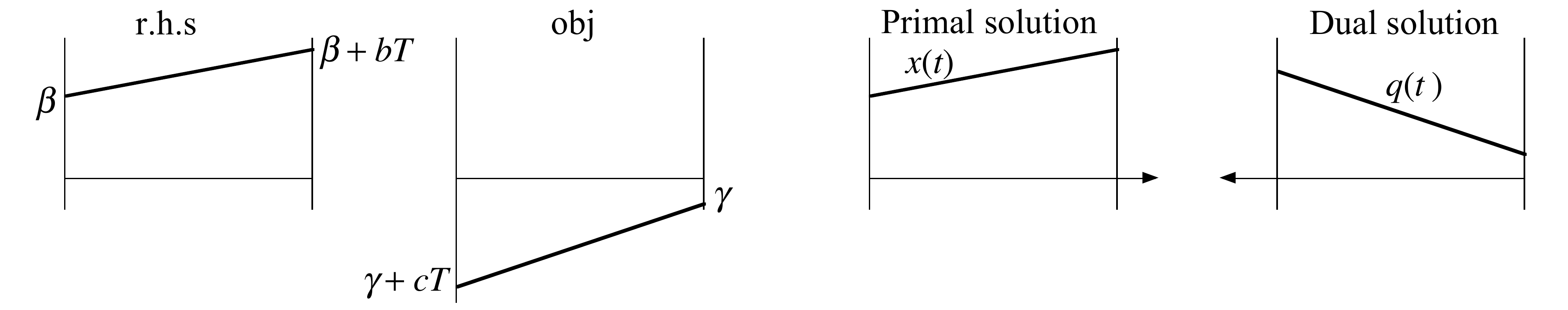} 
  \caption{$K=J=1$, Case 4:    $\beta  > 0,b > 0,\gamma  < 0,c < 0$}
  \label{fig.case4}
\end{figure}

\subsubsection*{Feasible for some $T$}  
The next 4 cases have $\beta>0,\,b<0$ and are feasible and bounded only for $0\le T \le -\frac{\beta}{b}$.  For $T >  -\frac{\beta}{b}$ the primal is infeasible and the dual unbounded.

\subsubsection*{Case 5:  $\beta>0,\,b<0,\,\gamma>0,\,c>0$}
Here the primal  control is constant $U(t)=\beta+bT$ with impulse at $0$, and slack $x(t)=- b(T-t)$, and the dual control is constant $P(t)=\gamma+cT$ with impulse at $0$, and slack $q(t)=c(T-t)$.
\begin{figure}[h!] 
 \centering
 \includegraphics[width=5in]{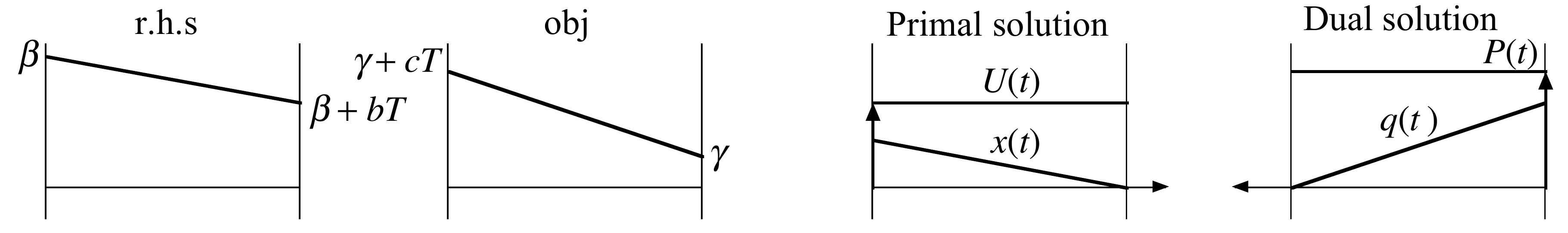} 
  \caption{$K=J=1$, Case 5:    $\beta  > 0,b < 0,\gamma  > 0,c > 0$}
  \label{fig.case5}
\end{figure}

\subsubsection*{Case 6:  $\beta>0,\,b<0,\,\gamma>0,\,c<0$}
Here the primal  control is $U(t)=0,\,t<T,\,U(T)=\beta+bT$ with impulse at $T$, and slack $x(t)=\beta+ bt,\,t<T,\,x(T)=0$, with a downward jump $x^\downarrow=\beta+bT$  at $T$, similar to Case 2.  The dual control is $P(t)=\gamma$, with an impulse at $0$, and  slack $q(t)=-ct$.
\begin{figure}[h!] 
 \centering
 \includegraphics[width=5in]{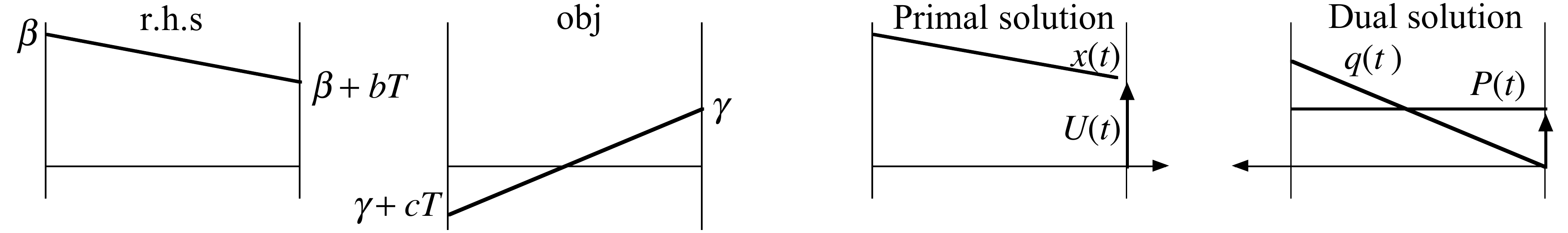} 
  \caption{$K=J=1$, Case 6:    $\beta  > 0,b < 0,\gamma  > 0,c < 0$}
  \label{fig.case6}
\end{figure}

\subsubsection*{Case 7:  $\beta>0,\,b<0,\,\gamma<0,\,c>0$}
The optimal solution for this case consists of a single interval for all $0\le T\le -\frac{\beta}{b}$, but it is different when $0 \le T\le -\frac{\gamma}{c}$, and when      $-\frac{\gamma}{c}\le T \le -\frac{\beta}{b}$.
For   $T<- \frac{\gamma}{c}$ the controls are $U(t)=P(t)=0$ with slacks $x(t)=\beta+bt$, $q(t)=-\gamma-ct$.  For $- \frac{\gamma}{c}<T\le -\frac{\beta}{b}$  The primal control is $U(t)=\beta+bT$ with impulse at $0$ and slack $x(t)=-b(T-t)$, and the dual control is $P(t)=\gamma+cT$ with impulse at $0$ and slack $q(t)=-c(T-t)$.   
At the time horizon $T=- \frac{\gamma}{c}$ the Uniqueness Condition \ref{asm.uniqueness} is violated, and the solution to the primal problem is not unique (similar to Case 3):  There is an impulse $\bu^0$ of size $0\le \bu^0\le \beta - b \frac{\gamma}{c}$, with $U(t)=\bu^0$, and slack  $x(t) = \beta-\bu^0  +bt$.  
\begin{figure}[h!] 
 \centering
 \includegraphics[width=5in]{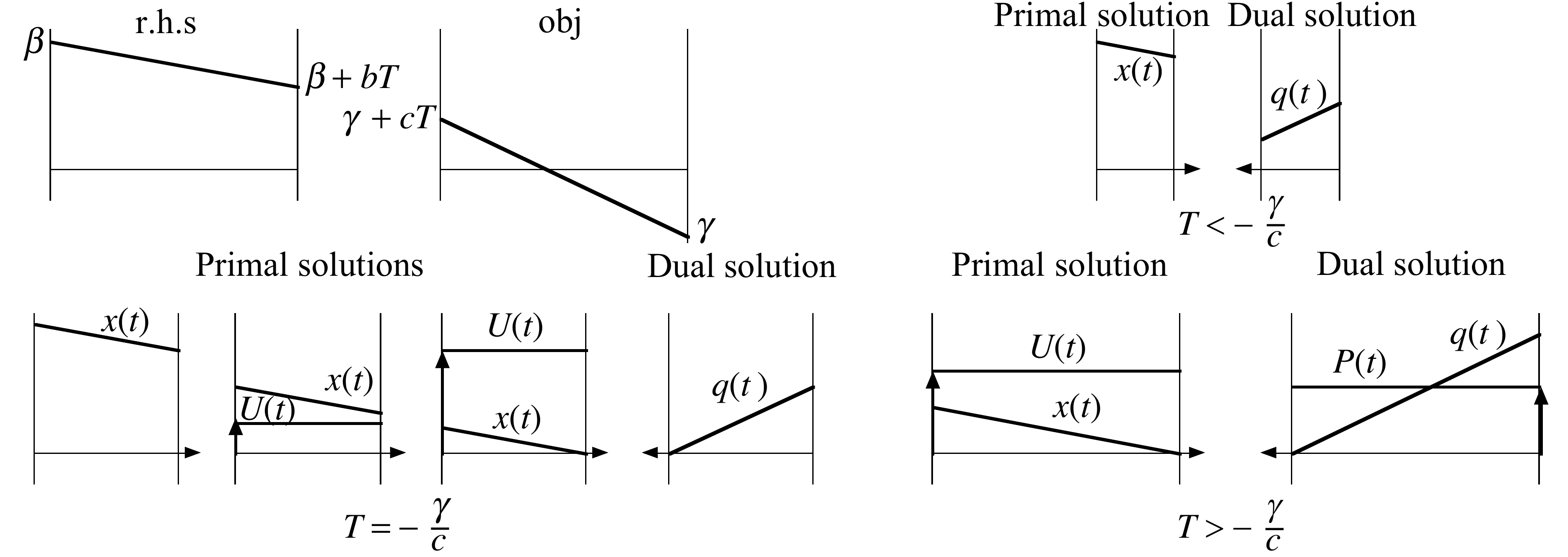} 
  \caption{$K=J=1$, Case 7:    $\beta  > 0,b < 0,\gamma  < 0,c > 0$}
  \label{fig.case7}
\end{figure}

\subsubsection*{Case 8:  $\beta>0,\,b<0,\,\gamma<0,\,c<0$}
This is similar to Case 4.   The primal  and dual controls are $U(t)=P(t)=0$ and the primal and dual slacks are  $x(t)=\beta+bt$, $q(t)=-\gamma-cT$.
\begin{figure}[h!] 
 \centering
 \includegraphics[width=5in]{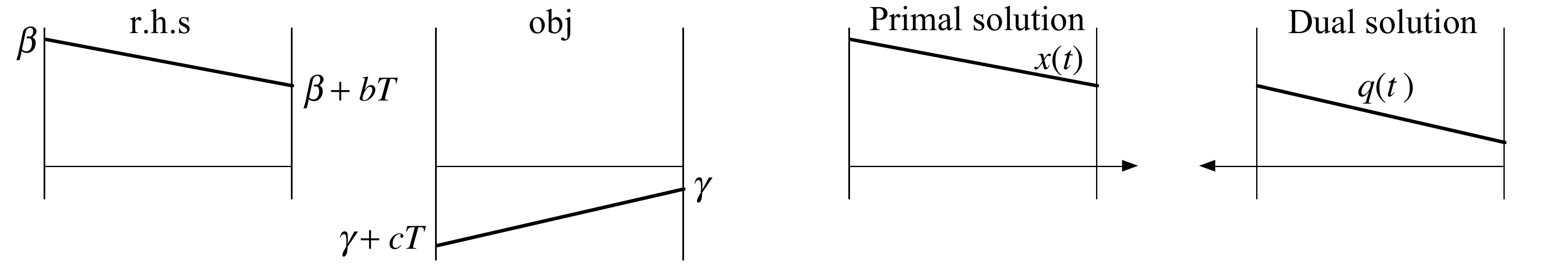} 
  \caption{$K=J=1$, Case 8:    $\beta  > 0,b < 0,\gamma  < 0,c < 0$}
  \label{fig.case8}
\end{figure}

\subsubsection*{Infeasible for all $T$}  
When $\beta<0$ the primal problem is infeasible, and the dual problem is unbounded, for all 8 sign combinations of $b,\,\gamma,\,c$.

\subsection{2$\times$2 example}
\label{sec.twothreedim}
One-dimensional examples are too simple to show all the possible diversity of M-CLP solutions. For better illustration of this diversity we need to consider $J=K=2$. We present the solution of a 2$\times$2 problem which is primal and dual feasible for all $T$.  It illustrates how the solution evolves as the time horizon changes.  

{\sc Example \ref{sec.twothreedim}\quad} The problem data are:
\[
 A=\left(
\begin{array}{cc}
 5 & 2 \\
 3 & 4
\end{array}
\right) \quad \beta=\left(
\begin{array}{c}
 8 \\
 10
\end{array}
\right)  \quad b=\left(
\begin{array}{c}
 3 \\
 1
\end{array}
\right) \quad \gamma=\left(
\begin{array}{c}
 5 \\
 6
\end{array}
\right)  \quad c=\left(
\begin{array}{c}
 1 \\
 2
\end{array}
\right)\]
Initially, for small $T$, this problem has a single interval solution.  It is given by:
{\raggedright
$ \displaystyle
\bu^0=\left[ \begin{array}{c}\displaystyle \frac{6}{7} + \frac{4}{35} T \\[0.3cm] \displaystyle \frac{13}{7} - \frac{2}{7} T \end{array} \right] \;
x^0 = \left[ \begin{array}{c}\displaystyle 0 \\[0.3cm] \displaystyle \frac{4T}{5} \end{array} \right]\;  \bp^0 =\left[ \begin{array}{c}\displaystyle 0 \\[0.1cm] \displaystyle 0 \end{array} \right] \; \bq^0 = \left[ \begin{array}{c}\displaystyle 0 \\[0.3cm] \displaystyle 0 \end{array} \right]$\\[0.1cm]
$ \displaystyle 
u^1 = \left[ \begin{array}{c}\displaystyle \frac{3}{5} \\[0.3cm] \displaystyle 0 \end{array} \right]\; \dx^1 =\left[ \begin{array}{c}\displaystyle 0 \\[0.3cm] \displaystyle - \frac{4}{5} \end{array} \right] \;  p^1 = \left[ \begin{array}{c}\displaystyle \frac{1}{5} \\[0.3cm] \displaystyle 0 \end{array} \right]\; \dq^1 = \left[ \begin{array}{c}\displaystyle 0 \\[0.3cm] \displaystyle - \frac{8}{5} \end{array} \right] $\\[0.1cm]
$\displaystyle
\bu^N = \left[ \begin{array}{c}\displaystyle 0 \\[0.3cm] \displaystyle 0 \end{array} \right]\; \bx^N = \left[ \begin{array}{c}\displaystyle 0 \\[0.3cm] \displaystyle 0 \end{array} \right]\; \bp^N = \left[ \begin{array}{c}\displaystyle \frac{1}{7} - \frac{12}{35} T \\[0.3cm] \displaystyle \frac{10}{7} + \frac{4}{7} T \end{array} \right]\; q^N = \left[ \begin{array}{c}\displaystyle 0 \\[0.3cm] \displaystyle \frac{8T}{5} \end{array} \right]$.\\[0.1cm] }
\noindent and it  is valid for time horizons $0\le T \le \frac{5}{12}$.
The solution for $T=\frac{1}{3}$ is displayed in Figure \ref{fig.1}.  
\begin{figure}[H] 
\centering 
\caption[Example \ref{sec.twothreedim}. Solution for $T=1/3.$]{Example \ref{sec.twothreedim}. Solution for $\displaystyle T=\frac{1}{3}.$ Impulse scale 1:1} 
\includegraphics[width=4.6in]{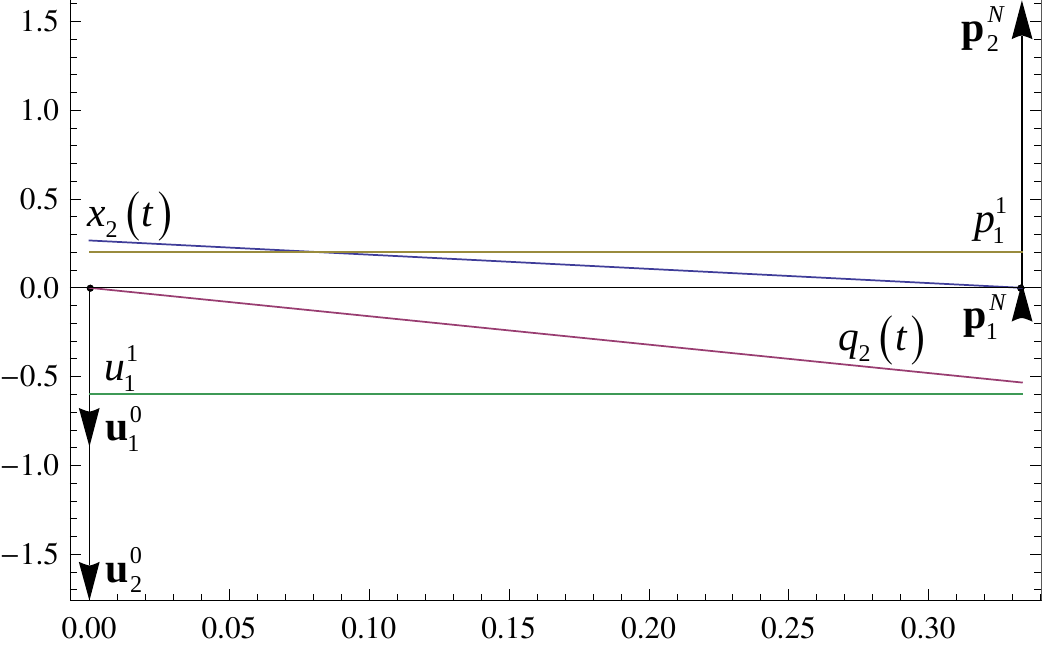} 
\label{fig.1} 
\end{figure}
In this display and in the subsequent displays we plot  all the non-zero values of the impulses and the control rates, and all the non-zero values of the states (slacks), as follows:  We plot $-\bu^0,-\bu^N,\bp^0,\bp^N$ at the left and right ends of the time horizon, and $-u(t),x(t),p(T-t),-q(T-t)$ for $0<t<T$.  We plot $x,P$ as positive, $-q,-U$ as negative values.
We scale for all the impulses different from rates and states, with the values on the vertical axis measuring  rates and states.  Recall that dual variables run in reversed time.  

 At time horizon $T = \frac{5}{12}$ the value of $\bp_1^N$ shrinks to $0$, and for larger time horizon a second interval has to be added at $T$.
For time horizons  $5/12 \le T \le 1$ the solution consists of two intervals,

{\raggedright
$ \displaystyle
\bu^0=\left[ \begin{array}{c}\displaystyle \frac{46}{49} - \frac{4}{49} T \\[0.3cm] \displaystyle \frac{81}{49} + \frac{10}{49} T \end{array} \right], \;
x^0 = \left[ \begin{array}{c}\displaystyle 0 \\[0.3cm] \displaystyle \frac{4}{7} - \frac{4}{7}T \end{array} \right]\;  \bp^0 =\left[ \begin{array}{c}\displaystyle 0 \\[0.3cm] \displaystyle 0 \end{array} \right] \; \bq^0 = \left[ \begin{array}{c}\displaystyle 0 \\[0.3cm] \displaystyle 0 \end{array} \right]$\\[0.1cm]
$ \displaystyle 
u^1 = \left[ \begin{array}{c}\displaystyle \frac{3}{5} \\[0.3cm] \displaystyle 0 \end{array} \right]\; \dx^1 =\left[ \begin{array}{c}\displaystyle 0 \\[0.3cm] \displaystyle - \frac{4}{5} \end{array} \right] \;  p^1 = \left[ \begin{array}{c}\displaystyle \frac{1}{5} \\[0.3cm] \displaystyle 0 \end{array} \right]\; \dq^1 = \left[ \begin{array}{c}\displaystyle 0 \\[0.3cm] \displaystyle - \frac{8}{5} \end{array} \right] \tau_1 = \frac{5}{7} - \frac{5}{7}T$\\[0.1cm]
$ \displaystyle 
u^2 = \left[ \begin{array}{c}\displaystyle \frac{1}{3} \\[0.3cm] \displaystyle 0 \end{array} \right]\; \dx^2 =\left[ \begin{array}{c}\displaystyle \frac{4}{3} \\[0.3cm] \displaystyle 0 \end{array} \right] \;  p^2 = \left[ \begin{array}{c}\displaystyle 0 \\[0.3cm] \displaystyle \frac{1}{3} \end{array} \right]\; \dq^2 = \left[ \begin{array}{c}\displaystyle 0 \\[0.3cm] \displaystyle - \frac{2}{3} \end{array} \right] \tau_2 = \frac{12}{7} T - \frac{5}{7}$\\[0.1cm]
$\displaystyle
\bu^N = \left[ \begin{array}{c}\displaystyle 0 \\[0.3cm] \displaystyle 0 \end{array} \right]\; \bx^N = \left[ \begin{array}{c}\displaystyle \frac{16}{7}T - \frac{20}{21} \\[0.3cm] \displaystyle 0 \end{array} \right]\; \bp^N = \left[ \begin{array}{c}\displaystyle 0 \\[0.3cm] \displaystyle \frac{5}{3} \end{array} \right],\; q^N = \left[ \begin{array}{c}\displaystyle 0 \\[0.3cm] \displaystyle \frac{2}{3} \end{array} \right]$.\\[0.3cm]}
\noindent The solution for $T=\frac{3}{4}$ is displayed in Figure \ref{fig.1.2}.
\begin{figure}[h!] 
\centering 
\caption[Example \ref{sec.twothreedim}. Solution for $ T=3/4.$]{Example \ref{sec.twothreedim}. Solution for $\displaystyle T=\frac{3}{4}.$ Impulse scale 1:3} 
\includegraphics[width=4.6in]{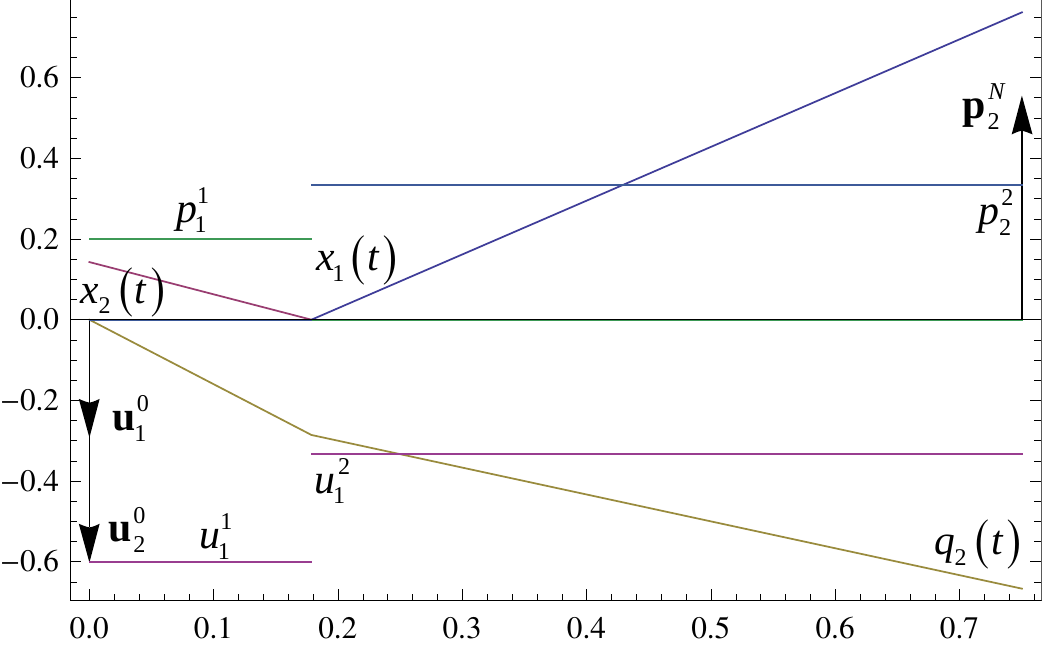} 
\label{fig.1.2} 
\end{figure}

At time horizon $T = 1$ the interval $\tau_1$ and the value of $x^0_2$ shrink to $0$.  At this time horizon $\gamma_1+ c_1 T = \gamma_2 = 6$ so the Uniqueness Condition fails, and the boundary values are not unique.  For time horizon $T=1$ and for any $0 \le \theta \le 1$ solution consists of one interval.

{\raggedright
$ \displaystyle
\bu^0=\left[ \begin{array}{c}\displaystyle \frac{6}{7} - \frac{6}{7} \theta \\[0.3cm] \displaystyle \frac{13}{7}  + \frac{9}{14} \theta \end{array} \right], \;
x^0 = \left[ \begin{array}{c}\displaystyle 3\theta \\[0.2cm] \displaystyle 0 \end{array} \right]\;  \bp^0 =\left[ \begin{array}{c}\displaystyle 0 \\[0.2cm] \displaystyle 0 \end{array} \right] \; \bq^0 = \left[ \begin{array}{c}\displaystyle 0 \\[0.2cm] \displaystyle 0 \end{array} \right]$\\[0.1cm]
$ \displaystyle 
u^1 = \left[ \begin{array}{c}\displaystyle \frac{1}{3} \\[0.2cm] \displaystyle 0 \end{array} \right]\; \dx^1 =\left[ \begin{array}{c}\displaystyle \frac{4}{3} \\[0.2cm] \displaystyle 0 \end{array} \right] \;  p^1 = \left[ \begin{array}{c}\displaystyle 0 \\[0.2cm] \displaystyle \frac{1}{3} \end{array} \right]\; \dq^1 = \left[ \begin{array}{c}\displaystyle 0 \\[0.2cm] \displaystyle - \frac{2}{3} \end{array} \right] $\\[0.1cm]
$\displaystyle
\bu^N = \left[ \begin{array}{c}\displaystyle 0 \\[0.2cm] \displaystyle 0 \end{array} \right]\; \bx^N = \left[ \begin{array}{c}\displaystyle \frac{4}{3} + 3 \theta \\[0.2cm] \displaystyle 0 \end{array} \right]\; \bp^N = \left[ \begin{array}{c}\displaystyle 0 \\[0.2cm] \displaystyle \frac{5}{3} \end{array} \right],\; q^N = \left[ \begin{array}{c}\displaystyle 0 \\[0.2cm] \displaystyle \frac{2}{3} \end{array} \right]$.\\[0.2cm]}
\noindent The objective value for this $T$ is $ z=131/6 $. The solution for time horizon $T=1$ and $\theta=1$ is displayed in Figure \ref{fig.1.3}.

For all time horizons $T\ge 1$ the optimal solution again consists of two intervals. The new interval  is inserted at $t=0$.   The solution is given by:
{\raggedright
$ \displaystyle
\bu^0=\left[ \begin{array}{c}\displaystyle 0 \\[0.2cm] \displaystyle \frac{5}{2} \end{array} \right], \;
x^0 = \left[ \begin{array}{c}\displaystyle 3 \\[0.2cm] \displaystyle 0 \end{array} \right]\;  \bp^0 =\left[ \begin{array}{c}\displaystyle 0 \\[0.2cm] \displaystyle 0 \end{array} \right] \; \bq^0 = \left[ \begin{array}{c}\displaystyle \frac{1}{2} T - \frac{1}{2} \\[0.2cm] \displaystyle 0 \end{array} \right]$\\[0.1cm]
$ \displaystyle 
u^1 = \left[ \begin{array}{c}\displaystyle 0 \\[0.2cm] \displaystyle \frac{1}{4} \end{array} \right]\; \dx^1 =\left[ \begin{array}{c}\displaystyle \frac{5}{2} \\[0.2cm] \displaystyle 0 \end{array} \right] \;  p^1 = \left[ \begin{array}{c}\displaystyle 0 \\[0.2cm] \displaystyle \frac{1}{2} \end{array} \right]\; \dq^1 = \left[ \begin{array}{c}\displaystyle \frac{1}{2} \\[0.2cm] \displaystyle 0 \end{array} \right] \tau_1 = T - 1 $\\[0.1cm]
$ \displaystyle 
u^2 = \left[ \begin{array}{c}\displaystyle \frac{1}{3} \\[0.2cm] \displaystyle 0 \end{array} \right]\; \dx^2 =\left[ \begin{array}{c}\displaystyle \frac{4}{3} \\[0.2cm] \displaystyle 0 \end{array} \right] \;  p^2 = \left[ \begin{array}{c}\displaystyle 0 \\[0.2cm] \displaystyle \frac{1}{3} \end{array} \right]\; \dq^2 = \left[ \begin{array}{c}\displaystyle 0 \\[0.2cm] \displaystyle - \frac{2}{3} \end{array} \right] \tau_2 = 1$\\[0.1cm]
$\displaystyle
\bu^N = \left[ \begin{array}{c}\displaystyle 0 \\[0.2cm] \displaystyle 0 \end{array} \right]\; \bx^N = \left[ \begin{array}{c}\displaystyle \frac{11}{6} + \frac{5}{2} T \\[0.2cm] \displaystyle 0 \end{array} \right]\; \bp^N = \left[ \begin{array}{c}\displaystyle 0 \\[0.2cm] \displaystyle \frac{5}{3} \end{array} \right],\; q^N = \left[ \begin{array}{c}\displaystyle 0 \\[0.2cm] \displaystyle \frac{2}{3} \end{array} \right]$.\\[0.2cm]}
\begin{figure}[h!t] 
\centering 
\caption[Example \ref{sec.twothreedim}. Solution for $T=1.$]{Example \ref{sec.twothreedim}. Solution for $T=1$ and $\theta=1$. Impulse scale 1:3} 
\includegraphics[width=4.6in]{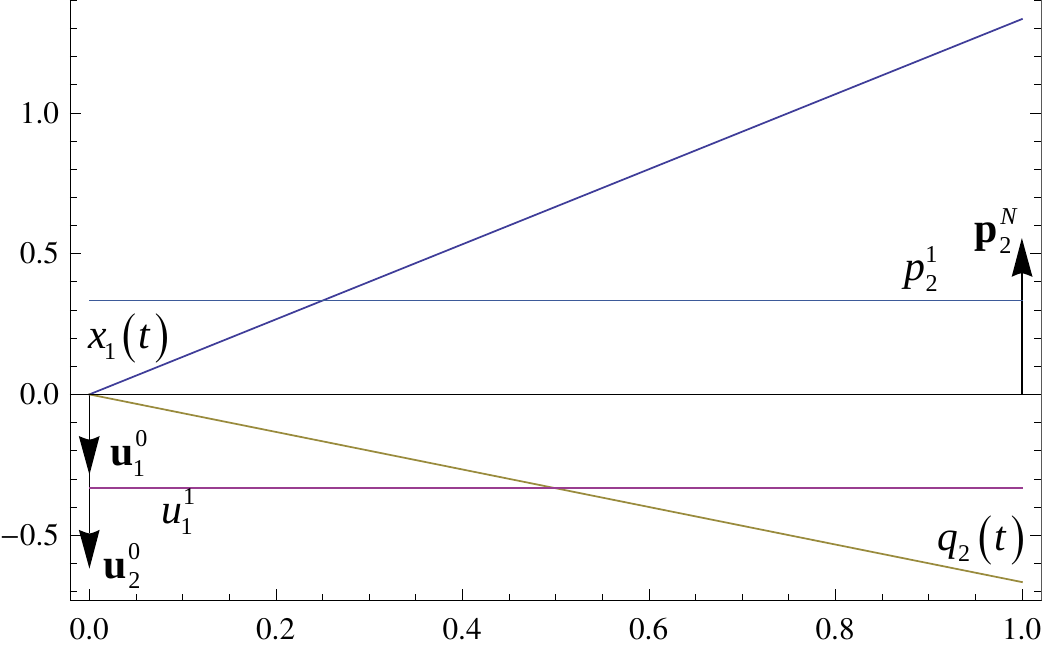} 
\label{fig.1.3} 
\end{figure}
\noindent  The solution for time horizon $T=\frac{3}{2}$ is displayed in Figure \ref{fig.1.4}.  

\begin{figure}[h!t] 
\centering 
\caption[Example \ref{sec.twothreedim}. Solution for $T=3/2.$]{Example \ref{sec.twothreedim}. Solution for $\displaystyle T=\frac{3}{2}.$ Impulse scale 1:2} 
\includegraphics[width=4.6in]{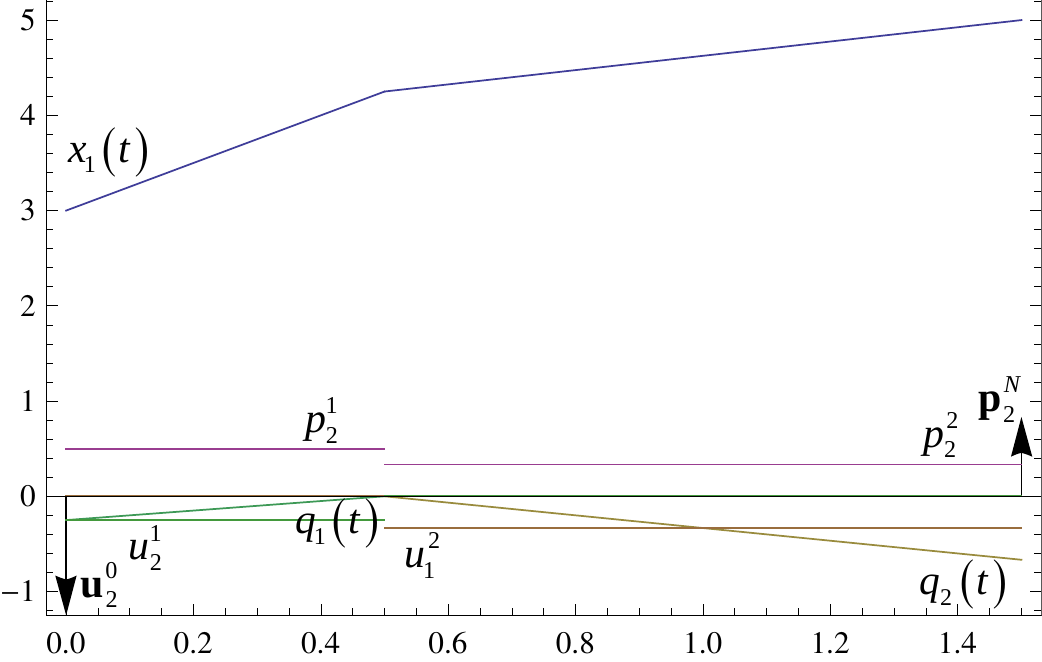} 
\label{fig.1.4} 
\end{figure}

\subsection{SCLP examples without strong duality}
\label{sec.SCLP}
These examples for which strong duality between SCLP and SCLP$^*$ fails, cannot be solved by the simplex type algorithm described in \cite{weiss:08}.
%
\subsubsection*{Case 1: SCLP is feasible and bounded, but SCLP$^*$ is infeasible}
Consider an SCLP problem with the following data:
\[
G = 1, F = H = b = d = \emptyset, \alpha \ge 0,  a > 0, \gamma > 0, c > 0 
\]
SCLP problem with this data is feasible and bounded, but SCLP$^*$ problem is not feasible, because  the constraint $\int_0^t p(t) \ge \gamma$ cannot be satisfied with $\gamma > 0$. 
However, M-CLP/M-CLP$^*$ extensions of these problems are feasible and have optimal solutions described as Case 1 in Section \ref{sec.onedim}.  
We have:
\[
\sup V(SCLP) = V(\mbox{M-CLP}) = \gamma (\alpha + aT) + c T \left(\alpha + \frac{a T}{2}\right)
\]
Here SCLP has no optimal solution if $\alpha>0$, but does have the optimal solution $u(t)=a$ if $\alpha=0$.
%
\subsubsection*{Case 2: SCLP/SCLP$^*$ are feasible, but SCLP has no optimal solution}
Consider an SCLP problem with the following data:
\[
G = 1, F = H = b = d = \emptyset, \alpha > 0,  a > 0, \gamma < 0, c > 0 
\]
Both SCLP/SCLP$^*$  are feasible for all $T \ge 0$. Moreover, for $T < - \frac{\gamma}{c}$ both SCLP and SCLP$^*$  have optimal solutions: $u(t) = 0, x(t) = \alpha + at, \,p(t) = 0, q(t) = \gamma + ct$ with objective value $0$.

For $T> - \frac{\gamma}{c}$ the
M-CLP/M-CLP$^*$ extensions of this problem are described as Case 3 in Section \ref{sec.onedim}.
M-CLP$^*$ has an absolutely continuous solution $P(t)$, and so SCLP$^*$ is solved by $p(t)=\frac{dP(t)}{dt}$. 
However,  for $T > - \frac{\gamma}{c}$ SCLP does not have an optimal solution.   The  objective values satisfy:
\[
\sup V(SCLP) = V(\mbox{M-CLP}) = V(\mbox{M-CLP}^*)= V(SCLP^*) = \alpha (\gamma + cT) + \frac{a c T^2}{2} + a \gamma \left( T + \frac{\gamma}{2c} \right)
\]

\subsubsection*{Case 3: SCLP/SCLP$^*$ are feasible and bounded, but both have no optimal solutions}
Consider an SCLP problem with following data:
\[
G = \left[ \begin{array}{cc} 1 & 0 \\ 0 & -1 \end{array}\right], F = H = b = d = \emptyset, \alpha = \left[ \begin{array}{c} 1  \\  3 \end{array}\right],  a = \left[ \begin{array}{c} 5  \\  -1 \end{array}\right], \gamma = \left[ \begin{array}{c} -2  \\  -1 \end{array}\right], c = \left[ \begin{array}{c} 1  \\  -6 \end{array}\right] 
\] 

For $T \le 2$ both SCLP and SCLP$^*$ have the following optimal solutions:
\[
u(t) = \left[ \begin{array}{c} 0  \\  0 \end{array}\right], x(t) = \left[ \begin{array}{c} 1 + 5t \\  3 - t \end{array}\right], p(t) = \left[ \begin{array}{c} 0  \\  0 \end{array}\right], q(t) = \left[ \begin{array}{c} 2 - t \\  1 + 6t \end{array}\right]
\]
with objective value $0$. 

For $2 < T \le 3$ SCLP has no optimal solution, but SCLP$^*$ has an optimal solution: $p_1(t) = 0, q_1(t) = 2 - t$ for $t \le 2$, $p_1(t) = 1, q_1(t) = 0$ for $2 < t \le T$, $p_2(t) = 0, q_2(t) = 1 + 6t$ for all $t \le T$. The corresponding objective values satisfy:
\[
\sup V(SCLP) = V(SCLP^*) = 8 - 9T + 2.5 T^2.
\]

For $T > 3$ SCLP/SCLP$^*$ are feasible but both have no optimal solutions. The M-CLP/M-CLP$^*$ extensions have optimal solutions (note that dual run in reversed time):\\
\scalebox{0.96}{$
\begin{array}{lllll} U_1(t) =  1 + 5t,& x_1(t) = 0, & P_1(t) = t - 2,& q_1(t) = 0 & t \le T - 2 \\ 
                     U_1(t) = 5T - 9, & x_1(t) = 5(t-T+2), & P_1(t) = 0, & q_1(t) = 2 - t & t > T - 2 \\
                     U_2(t) = 0, & x_2(t) = 3 - t, & P_2(t) = 6T - 17,& q_2(t) = 6(t-T+3) &  t \le 3 \\
                     U_2(t) = t - 3, & x_2(t) = 0,  & P_2(t) = 1 + 6t, & q_2(t) = 0 &  t > 3
\end{array}                     
$}

The  objective values satisfy:
\[
\sup V(SCLP) = V(\mbox{M-CLP}) = V(\mbox{M-CLP}^*) = \inf V(SCLP^*) = - 16 + 8T - 0.5 T^2
\]

\end{document}